\documentclass[11pt,reqno]{siamart190516}

\usepackage{amsfonts,amsmath}
\usepackage{amssymb}
\usepackage{amsfonts}
\usepackage{bm,bbm}

\usepackage{algorithm}
\usepackage[noend]{algpseudocode} 

\usepackage{xfrac}
\usepackage{caption}
\usepackage{capt-of} 

\usepackage{cite}
\usepackage{cleveref}

\usepackage{color}
\usepackage{hyperref}
\usepackage{graphicx,epstopdf} 
\usepackage[caption=false]{subfig} 
\graphicspath{{./Figures/}}
\usepackage{booktabs} 

\usepackage{comment}

 \usepackage{tikz}
 \usetikzlibrary{backgrounds,mindmap}
\usetikzlibrary{shapes.geometric, arrows}

\newcommand{\abs}[1]{\left\vert#1\right\vert}

\newcommand{\norm}[1]{\left\vert \left\vert #1 \right\vert\right\vert}

\newcommand{\Real}{\mathbb{R}}

\DeclareMathOperator\tr{Tr}

\newtheorem{thm}{Theorem}[section]

\newtheorem{lem}[thm]{Lemma}

\newtheorem{rem}[thm]{Remark}
\newtheorem{exm}[thm]{Example}

\title{State-dependent temperature control in Langevin diffusions using  numerical exploratory Hamiltonian-Jacobi-Bellman equations}
\author{Taorui Wang\thanks{Department of Mathematical Sciences, Worcester Polytechnic Institute, 
Worcester, MA. USA. (\email{twang13@wpi.edu}, 
\email{gwang2@wpi.edu}, 
\email{zzhang7@wpi.edu}) Gu Wang is partially supported
by National Science Foundation grant \#DMS - 2206282.}
\and 
Xun Li\thanks{Department of Applied Mathematics,
Hong Kong Polytechnic University,
  Hong Kong. China. \email{(li.xun@polyu.edu.hk}) Xun Li is supported by the Research Grants Council of Hong Kong under grants 15225124, PolyU 4-ZZVB, 4-ZZP4. }
\and 
Gu Wang\footnotemark[1]
  \and 
Zhongqiang Zhang\footnotemark[1]  
  }
\date{\today}

\begin{document}
\maketitle
\begin{abstract}
 Choosing how much noise to add in Langevin dynamics is essential for making these algorithms effective in challenging optimization problems.
One promising approach is to determine this noise by solving Hamilton–Jacobi–Bellman (HJB) equations and their exploratory variants. Though these ideas have been demonstrated to work well in one dimension, extension to high-dimensional minimization has been limited by two unresolved numerical challenges: setting reliable control bounds and stably computing the second‑order information (Hessians) required by the equations.  
 These issues and the broader impact of HJB parameters have not been systematically examined. This work provides the first such investigation. We introduce principled control bounds and develop a physics‑informed neural network framework that embeds the structure of exploratory HJB equations directly into training, stabilizing computation, and enabling accurate estimation of state‑dependent noise in high‑dimensional problems. Numerical experiments demonstrate that the resulting method remains robust and effective well beyond low‑dimensional test cases.
\end{abstract}

 \textbf{keywords}    
 stochastic optimization, exploratory control, entropy‑regularized HJB equations, gradient descent methods, scientific machine learning, high‑dimensional PDEs
 


\section{Introduction}%
In gradient descent methods for optimization, one common strategy to escape local minima and saddle points is the addition of noise, leading to Langevin dynamics. 
Designing such noise terms, however, is nontrivial and often problem-dependent.
From a theoretical perspective, noise can be controlled through diffusion terms, with the optimal control derived from the Hamilton–Jacobi–Bellman (HJB) equation (see the workflow below). 
Yet, this classical approach often yields extreme (bang--bang) noise levels, which can be rigid and numerically brittle \cite{5445043, MR1923277}.
 To address this limitation, Gao et al. \cite{GaoXuZhou2022} proposed a randomized control with entropy regularization. Their method, termed the exploratory HJB equation, demonstrated clear benefits in one-dimensional problems, such as escaping local minima of a double-well function. Tang et al. \cite{TangZhangZhou2022} further established that the exploratory HJB equation converges to the classical HJB equation as the regularization vanishes.
\begin{center}
\tikzstyle{startstop} = [rectangle, rounded corners, minimum width=3cm, minimum height=1cm,text centered, draw=black, fill=white!30]
\tikzstyle{arrow} = [thick,->,>=stealth]

\begin{tikzpicture}[node distance=4cm, scale=0.4]

\node (problem) [startstop] {$\min_{x\in \mathbb{R}^d} f(\bm{x})$};
\node (method) [startstop, right of = problem] {Langevin dynamics};
\node (approach) [startstop, right of = method, xshift=1cm] 
    {\shortstack{Hamilton--Jacobi--Bellman\\ (HJB) equation}};

\node (solver) [startstop, below of = approach, xshift=-1.5cm, yshift=2.6cm] 
    {regularization, parameters, solvers};

\node (control) [startstop, left of = solver, xshift=-2cm] 
    {design control (temperature)};

\draw [arrow] (method) -- (problem);
\draw [thick,solid] (solver) -- (approach);
\draw [arrow] (control) -- (method);
\draw [arrow] (approach) -- (control);

\end{tikzpicture}
\end{center}


Despite these advances, the extent to which exploratory HJB formulations provide reliable state-dependent temperature schedules in higher dimensions remains insufficiently understood, particularly in landscapes with numerous saddle points and competing local minima. 
Furthermore, scalable solvers for both classical and exploratory HJB equations are still under active development. Classical discretization-based approaches have been investigated in low dimensions, e.g., in \cite{FengLewisRapp2022}, while deep learning methods have been explored in low and high dimensions, e.g., in \cite{LiuEtAl2023PINNHJB,KimChoKimKim2025,MengEtAl2024PINNPI,dupret2026deep}. The underlying computational difficulty stems from strong nonlinearity combined with the need to resolve solutions on high-dimensional domains.

In this work, we \textbf{develop practical numerical methods for solving eHJB equations to obtain efficient state-dependent  Langevin dynamics for minimization problems.}
The work differs from the literature in that our goal is to obtain accurate state-dependent noise coefficients that depend on the Laplacian of the value functions—which solve the eHJB equation—rather than on the value functions themselves, e.g. in \cite{KimChoKimKim2025}.  
Three practical issues arise in the development of numerical methods:  
(i) 
A reliable evaluation of the Laplacian (trace of the Hessian), and  
(ii) stable computation of the log-partition term $-\log\!\int_{\mathcal U}\exp(-\frac{u}{\lambda}(\cdot))\,du$, approximating the minimum over $\mathcal{U}$ 
and 
(iii) the range of control for the noise magnitude or temperature.

First, the evaluation of the temperature requires the Hessian of the solution
to the HJB (eHJB) equations, which is continuous but not differentiable since 
the eHJB solution in \cite{GaoXuZhou2022} is locally $C^{2,\alpha}$, $\alpha\in (0,1)$.
Hence, additional regularization is essential. We adopt physics-informed neural networks (PINNs,\cite{RAISSI2019686}) to solve the eHJB equations, relying on their implicit regularization under stochastic gradient–based training. 
We show numerically that PINNs trained with the LION optimizer \cite{2023arXiv230206675C} yield solutions of eHJB equations that provide effective state-dependent temperatures for nonconvex optimization.
The above approach addresses the issue of (i). 

It is observed that 
 \emph{accurate} eHJB solutions are not required for effective optimization. Based on 
 Theorem \ref{thm:convergence-lambda-pertub}, the convergence of temperature is slow in the exploration parameter $\lambda$ and the training error. 
 Fortunately, we observe in Section~\ref{sec:numerical-examples} that the performance of 
 the Langevin dynamics with eHJB primarily depends on the \emph{relative shape} of the Laplacian rather than its precise values: nearly zero noise around global minimizers, and larger noise near local extrema and saddle points. PINNs provide smoothed approximations that capture the correct qualitative structure even when the underlying solution is not fully resolved.

Second, we develop an efficient stabilized implementation of the log-partition that avoids numerical integration while maintaining accuracy. %
The log-partition term  
has a closed-form expression for an interval $\mathcal U$, but direct evaluation can be unstable due to the exponential and logarithm functions. When their arguments are close to zero, we use Taylor's approximation to address (ii) (see Section \ref{ssec:stable-evaluation-z-h}).
Moreover, we use a truncation argument if the noise magnitude or temperature is very close to zero (see Section \ref{ssec:langevin-pinn-implementation}).

Third, we adopt a simple but effective scaling rule to determine the control set $\mathcal U=[u_{\min},u_{\max}]$, instead of manual selection \cite{GaoXuZhou2022}.
While the noise  coefficient  or temperature may remain positive, it may induce oscillations near global minimizers. To address this issue, we truncate the noise coefficients when they are below a 
dyadic fractions of $\sqrt{2u_{\max}}$ without harming the efficiency or the accuracy. 
See more details in Section \ref{ssec:choose-control-bound}.

In summary, we apply PINNs for exploratory HJB equations and use the Laplacian of the 
solutions to place the state-dependent temperature and noise coefficient. The novelty and key contributions of the work are listed below:
\begin{enumerate}
    \item \textbf{From theory and illustration in 1D to genuinely nonconvex minimization in higher dimensions.}
    We extend the computation using eHJB-based state-dependent temperature control from one dimension  \cite{GaoXuZhou2022} to $d=2$--$6$, where \emph{saddle points and local minimum} dominate the optimization landscape.
    The purpose of applying PINNs here is not to solve the eHJB equation pointwise, but to learn the structure of its Laplacian being small in the vicinity of global minimizers and large near saddle points and local minima. This perspective fundamentally differs from most existing HJB solvers, including the recent eHJB solver of \cite{KimChoKimKim2025}, which targets accurate pointwise solutions. In addition, our method solves the eHJB equation offline, requiring only a single run of PINNs, whereas \cite{KimChoKimKim2025} solves it on the fly and therefore invokes PINNs repeatedly.
    \item \textbf{A stable algorithm for elevating the nonlinear operator in eHJB.}
    We design an efficient, numerically stable implementation of the nonlinear log-partition operator in the eHJB equation, eliminating the need for numerical quadrature. See Section 
    \ref{ssec:stable-evaluation-z-h} for details. This algorithm allows stable computation in PINNs for eHJB directly and thus avoids the policy iterations and multiple runs of PINNs.

    \item \textbf{Control-range design and robust Langevin dynamics.}
    We provide in Section \ref{ssec:choose-control-bound} an empirical rule for selecting the control bounds that set the diffusion/noise coefficient (rather than hand-tuning as in \cite{GaoXuZhou2022}), and we stabilize the resulting dynamics with mirror reflection at the boundary of the computational domain and truncation of the noise coefficient to suppress oscillations near global minimizers while preserving exploration elsewhere.
\end{enumerate}

In this work, we consider the minimization problems on bounded domains, which admit no global minimizer on the boundary. 
The extension to unbounded domains is of its own interest and thus 
will be considered in a separate work.

The rest of the paper is organized as follows:
In Section \ref{sec:related-work}, we summarize works on Langevin dynamics, numerical methods for HJB equations and neural network methods for HJB equations and more generally nonlinear PDEs. 
We present in Section \ref{sec:setup} the problem formulation. 
In Section \ref{sec:numerics}, we present numerical methods for eHJB equations and the Langevin dynamics for minimization problems.
In Section \ref{sec:numerical-examples}, we present an example of solving eHJB equation and several examples of non-convex optimization. 
We summarize our work in Section \ref{sec:conclusion} and discuss some
limitations of the work and directions to be addressed.

\section{Literature review}
\label{sec:related-work}

In this section, we briefly review related works, including Langevin dynamics for optimization and classical grid-based methods and network-based methods for classical and exploratory HJB equations. At the end of the section, we discuss the key differences of the current work from the literature.


\textit{Langevin dynamics} provides a stochastic alternative to deterministic gradient descent for nonconvex optimization by injecting noise so that the induced dynamics favors global minimizers  rather than getting trapped in a single local basin, see e.g., 
 \cite{MR854068,MR942621}. 
 More recent theory has developed nonasymptotic guarantees for stochastic-gradient Langevin dynamics in high-dimensional nonconvex learning and optimization \cite{raginsky2017nonconvexlearningstochasticgradient,xu2020globalconvergencelangevindynamics}. In practice, an effective temperature (half of the square of the noise coefficient) is treated as a design parameter and found to improve efficiency  in deep learning \cite{neelakantan2015addinggradientnoiseimproves}. 
 Algorithmic temperature-control and multi-temperature strategies have been proposed, including explicit temperature control rules for annealing \cite{MunaNaka01} and replica-exchange mechanisms for coupling multiple temperatures\cite{dong2021replicaexchangenonconvexoptimization}. Also, the state-dependent temperature was designed in \cite{GaoXuZhou2022} to support effective exploration.

\textit{Classical numerical methods for HJB equations} are usually grid-based, which are expensive in high dimensions as the computational cost grows exponentially with the dimensionality of the physical domains, see e.g. \cite{CrandallIshiiLions1992,FlemingSoner2006,
FalconeFerretti2013,HuangForsythLabahn2012,FengLewisRapp2022}.
Also, these methods often explicitly use monotonicity, stability, and consistency, which impose additional barriers to the design of efficient numerical methods.

Neural-network PDE solvers have emerged as flexible, mesh-free alternatives to classical discretizations. Thus these methods can be extended to higher dimensions, e.g.,
physics-informed neural networks (PINNs) \cite{RAISSI2019686},
Deep Galerkin Method \cite{Sirignano_2018}, Backward stochastic differential equations (DeepBSDE)-based methods \cite{HanJentzenE2018,HurePhamWarin2020}, along with many subsequent variants.
In particular, there have been significant advances in solving high-dimensional PDEs, 
e.g.,  random alternating direction methods \cite{Hu_2024},
score-based diffusion \cite{hu2024scorebasedphysicsinformedneuralnetworks}, 
tensor neural networks \cite{wang2024tensorneuralnetworkshighdimensional}. In this work, we use PINNs for its simplicity. 

\textit{High-dimensional HJB equations} remain a central computational bottleneck in stochastic control literature. Existing neural approaches largely fall into two families: actor--critic schemes that learn both value and control representations, and physics-informed neural network (PINN) specialized to HJB equations, including policy-iteration PINNs and viscosity-solution-oriented formulations \cite{ZhouHanLu2021,NakamuraZimmererGongKang2021,MengEtAl2024PINNPI,LiuEtAl2023PINNHJB}. In actor--critic methods, the value function and control are parameterized by neural networks and trained via temporal-difference objectives and policy gradients, often with variance-reduction to mitigate Monte Carlo noise \cite{ZhouHanLu2021}. Despite substantial progress, reported high-dimensional demonstrations for classical HJB equations remain limited and frequently rely on weakly coupled dynamics or manufactured solutions for verification \cite{ZhouHanLu2021}.

The entropy-regularized/exploratory control framework provides an analytically and numerically attractive relaxation that replaces the pointwise extremum over controls by a log-partition(log-integral-exp) operator, yielding a distribution-valued (Gibbs) optimal policy \cite{WangZariphopoulouZhou2020}. The associated exploratory HJB equation has been studied for well-posedness and regularity, and its value function converges to that of the classical control problem as the exploration/regularization parameter $\lambda \to 0$ \cite{TangZhangZhou2022}. Beyond smoothing, the exploratory formulation can improve robustness by reducing sensitivity to noise and numerical perturbations, which is particularly relevant in annealing-type procedures \cite{GaoXuZhou2022}.
In \cite{KimChoKimKim2025}, 
PINN-based policy iteration has been applied to exploratory HJB equations, using an inner policy-iteration loop and approximating the value and policy separately.

In this work, we apply the framework of PINNs to solve the high-dimensional exploratory HJB equation \emph{directly}. 
The direct computation of eHJB avoids policy-iteration steps in  
\cite{dupret2026deep,KimChoKimKim2025} and thus saves the computational cost from multiple solves using PINNs. 
Once the eHJB equation is solved, we compute the noise coefficient based on the Hessian of the solution from PINNs, which leads to \textit{a state-dependent temperature} \cite{TangZhangZhou2022,GaoXuZhou2022} in Langevin dynamics. Thus, \textit{our computational goal in PINNs is totally different} from those in the literature, such as \cite{KimChoKimKim2025,LiuEtAl2023PINNHJB,dupret2026deep,MengEtAl2024PINNPI,ZhouHanLu2021}.


\section{Problem Setup}\label{sec:setup}

We consider the minimization problem
$
\min_{\bm{x}\in \Omega} f(\bm{x}),
$
where $\Omega\subset \mathbb{R}^d$ is a bounded  domain and
$f:\mathbb{R}^d\to\mathbb{R}$ is continuously differentiable.
For simplicity, we assume that the minima do not lie on the boundary $\partial \Omega$.

We denote by $\nabla := \nabla_{\bm{x}}$ the gradient with respect to $\bm{x}$ and by
$\nabla^2 := \nabla_{\bm{x}}^{2}$ the Hessian. We also write
$\Delta v := \mathrm{Tr}\!\big(\nabla^{2} v\big)$. 

Let us briefly recall the theory of   Langevin dynamics for   $\min_{\bm{x}\in \Real^d} f(\bm{x})$ and the determination of the noise coefficient via HJB. 
Adding  noise to the gradient descent method  
$X_{k+1}=X_k  - \eta_k \nabla f(X_k) $  leads to   Langevin dynamics, which 
help to escape local minima 
and saddle points. In the continuous time level, we have  
\begin{equation}\label{eq:opt-sde-rigid}
dX(t) = -\nabla f(X(t))\,dt + h (X(t))\,dW(t),\qquad X(0)=\bm{x} \in \Real^d,
\end{equation}
where $W_t$ is a $d$-dimensional Brownian motion and $\bm{x}$ is a guess of global minima.
The determination of the noise coefficient $h(\bm{x})$ can be formulated as controlled stochastic differential equations, which correspond
to   Hamilton-Bellman-Jacobi equations \cite{GaoXuZhou2022}. Specifically, consider the value function
\begin{equation}\label{eq:value-function}v(\bm{x})=\inf _{u \in \mathcal{A}_0(\bm{x})} \mathbb{E}\left[\int_0^{\infty} e^{-\rho t} f(X(t)) d t \mid X(0)=\bm{x}\right],
\end{equation}
where $\mathcal{A}_0$ be the set of admissible controls which may depend on the initial state   $\bm{x}$. The 
value function satisfies the classical HJB:
\begin{equation}\label{eq:classical-hjb}
-\rho v(\bm{x})+f(\bm{x})-\nabla f(\bm{x})\cdot\nabla v(\bm{x})+H(\nabla^2 v(\bm{x}))=0, \, H(\cdot)=\inf _{u\in \mathcal{U}}\left(u\operatorname{Tr}(\cdot)\right).
\end{equation}
Here $\mathcal{U}=[u_{\min},u_{\max}]$.
The corresponding optimal policy reads 
\begin{equation}\label{eq:class-opt-control}
 \bar{u}^*(\bm{x})=
u_{\min}  \text{ if }  \Delta v (\bm{x}) \geq 0;\text{ and is }    u_{\max}  \text{ otherwise,}
\end{equation} 
and the noise coefficient in the Langevin dynamics is  $h(X(t))=\sqrt{2 \bar{u}^*(X(t))}$.  

However, the noise coefficient is very sensitive to the sign of $\Delta v$ when $\Delta v$ is close to zero and thus is not computation friendly. A relaxed approach is proposed in \cite{GaoXuZhou2022} by approximating  the operator 
$H(\cdot)$ by 
\begin{equation}
 H_\lambda(\cdot) =-\lambda \ln \displaystyle\int_{\mathcal{U}}
\exp\!\Bigl(- {\lambda}^{-1}{\operatorname{Tr}(\cdot)}\, u\, \Bigr)\, \mathrm{d}u.
\end{equation} 
The resulting HJB equation, called 
  exploratory HJB equation in \cite{GaoXuZhou2022,TangZhangZhou2022}, reads
\begin{equation}\label{eq:exp-temp-lang}
F_\lambda(\nabla^2 v_\lambda, \nabla v_\lambda, v_\lambda, \bm{x})=:-\rho v_\lambda(\bm{x}) + f(\bm{x}) - \nabla v_\lambda(\bm{x})\cdot \nabla f(\bm{x})
+ H_\lambda(\nabla^2 v_\lambda(\bm{x})) =0, 
\end{equation}
The \emph{noise coefficient}  can be determined by 
\begin{eqnarray}\label{eq:noise-magnitude}
h_\lambda(\bm{x}): 
=\sqrt{( 2\int_{\mathcal{U}} u\,
\exp\!\left(-{\lambda}^{-1}\,u\,\Delta v_\lambda(\bm{x})\right)\,du)/{Z_\lambda(\bm{x})} }, 
Z_\lambda(\bm{x}) = \int_{\mathcal{U}}\exp\!\big(-{\lambda}^{-1}u\,\Delta v_\lambda(\bm{x})\big)\,du.
\end{eqnarray}
The induced state-dependent Langevin dynamics is
\begin{equation}\label{eq:opt-sde}
dX(t) = -\nabla f(X(t))\,dt + h_\lambda(X(t))\,dW(t),\qquad X(0)=\bm{x} \in\Real^d.
\end{equation}
%
It is shown in \cite{TangZhangZhou2022} that when $\lambda\to 0$, $v_\lambda$ converges to the value function $v$ in $C^{2,\alpha}$, for some $\alpha\in (0,1)$. 

For the minimization problem $\min_{\bm{x}\in\Omega}f(\bm{x})$, 
we still apply the above theory 
while we set $\bm{x}\in \Omega$.   For the corresponding HJB equations,  we apply
 the homogeneous Neumann boundary condition
$
\nabla v_\lambda(\bm{x})\cdot n(\bm{x})=0, \bm{x}\in \partial\Omega.
$
Correspondingly, the Langevin dynamics should be imposed with reflection boundary conditions, which we discuss in the next section.

\section{Methodology}\label{sec:numerics}
In this section, we present numerical methods solving eHJB through PINNs and the corresponding Langevin dynamics. 
We summarize in Table~\ref{tab:notation_numerics}  the key notation used in  Section~\ref{sec:numerics}.

\begin{table}[t]
\centering
\caption{Key notation in Section~\ref{sec:numerics}.}
\label{tab:notation_numerics}
\renewcommand{\arraystretch}{1.1}
\setlength{\tabcolsep}{4pt}
\begin{tabular}{ll}
\hline
\textbf{Symbol} & \textbf{Meaning} \\
\hline
$\mathcal R_\lambda(\bm{x};\phi)$ & PDE residual of the eHJB equation at $\bm{x}$ for network parameters $\phi$ \\
$N_\Omega$ & number of interior collocation points in $\Omega$   \\
$N_{\partial\Omega}$ & number of boundary collocation points on $\partial\Omega$  \\
$\alpha_{\mathrm{res}}$ & weight of the interior PDE residual term in \eqref{eq:loss_infinite} \\
$\alpha_{\partial\Omega}$ & weight of the boundary-condition term in \eqref{eq:loss_infinite} \\

$v_{\lambda,\phi}$ & PINN approximation of the exploratory value function $v_\lambda$ \\
$h_{\lambda,\phi}$ & PINN-induced noise coefficient computed from $\Delta v_{\lambda,\phi}$ \ \eqref{eq:noise-magnitude} \\
$\eta$ & step size in the Euler--Maruyama/Langevin iteration \\
$\tau$ & truncation threshold for avoiding numerical instability \\
$h_{\lambda,\phi}^\tau$ & truncated noise coefficient: $h_{\lambda,\phi}^\tau(\bm{x})=h_{\lambda,\phi}(\bm{x})\mathbf{1}_{\{h_{\lambda,\phi}(\bm{x})\ge\tau\}}$ \\
\hline
\end{tabular}
\end{table}

\subsection{PINNs for the exploratory HJB equation}\label{subsec:model-residuals}

We solve \eqref{eq:exp-temp-lang} with physics-informed neural networks (PINNs)~\cite{RAISSI2019686}.
Let $v_{\lambda,\phi}:\Omega\to\mathbb{R}$ be a neural network. 
Denote 
\begin{equation}\label{eq:residualboundary-elliptic}
\mathcal R_\lambda(\bm{x};\phi):=F_\lambda(\nabla^2 v_{\lambda,\phi},\nabla v_{\lambda,\phi},v_{\lambda,\phi},\bm{x}),
\qquad
\mathcal B_{\partial\Omega}(\bm{x};\phi):=\nabla v_{\lambda,\phi}(\bm{x})\cdot n(\bm{x}).
\end{equation}
%
Let $D_{\Omega}=\{\bm{x}_i\}_{i=1}^{N_\Omega}\subset\Omega$ and
$D_{\partial\Omega}=\{\bm{x}_\ell\}_{\ell=1}^{N_{\partial\Omega}}\subset\partial\Omega$
be  uniformly sampled collocation points. We find the parameters of a feedforward neural networks by minimizing  the following loss 
\begin{equation}\label{eq:loss_infinite}
\mathcal L(\phi;D_{\Omega},D_{\partial\Omega})
:=\alpha_{\mathrm{res}}\frac{1}{N_\Omega}\sum_{i=1}^{N_\Omega}\big|\mathcal R_\lambda(\bm{x}_i;\phi)\big|^2
+\alpha_{\partial\Omega}\frac{1}{N_{\partial \Omega}}\sum_{\ell=1}^{N_{\partial \Omega}}\big|\mathcal B_{\partial\Omega}(\bm{x}_\ell;\phi)\big|^2.
\end{equation}
Here, $\alpha_{\mathrm{res}},\alpha_{\partial\Omega}>0$ are user-chosen weights that balance the two terms in the loss. We briefly recall the training/optimization of the loss, in Algorithm~\ref{alg:exploratory-pinn}.

To estimate \textit{the errors of PINNs}, we use the following 
\begin{equation}\label{eq:exploratory-hjb-elliptic-pertubed}
F_\lambda(\nabla^2 v_{\lambda,\phi}, \nabla v_{\lambda,\phi}, v_{\lambda,\phi}, \bm{x})\;=\mathcal R_\lambda(\bm{x};\phi):=\varepsilon \widetilde{R}_{\lambda}(\bm{x};\phi).
\end{equation}
Here we set $\norm{\widetilde{R}_\lambda}_{L^\infty(\Omega)}=1$  and   $\varepsilon>0$.
Since we use $\Delta v_{\lambda,\phi}$ and \eqref{eq:noise-magnitude} instead of using the control \eqref{eq:class-opt-control}, two types of errors have been induced by $\lambda$ and $\varepsilon$ in the Langevin dynamics. 
\begin{thm}\label{thm:convergence-lambda-pertub}
Let $\Omega \subset\Real^d$ be an open bounded domain with a smooth boundary.
Assume that $f\in C^2(\Omega)$ and $\widetilde{R}_\lambda(\bm{x};\phi)\in L^\infty (\Omega)$ and is uniformly bounded in $\lambda$. Let $v$  and 
$v_{\lambda,\phi}$ satisfy solutions to 
\eqref{eq:classical-hjb} and \eqref{eq:exploratory-hjb-elliptic-pertubed}.
There exists $C>0$ independent of $\lambda$ and $\varepsilon$ such that  
\begin{equation}
u_{\min}\norm{\Delta v_{\lambda,\phi}-\Delta v}_{L^\infty(\Omega)}\leq C(\lambda  +\varepsilon \norm{\widetilde{R}_\lambda(\cdot,\phi)}_{L^\infty} ).
  \label{eq:convergence-derivatives-infty-final}
\end{equation} 
\end{thm}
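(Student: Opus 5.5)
The idea is to read both equations as algebraic relations for the single scalar $\Delta$ and compare them pointwise. The Hamiltonians $H$ and $H_\lambda$ depend on the Hessian only through its trace. Since $H_\lambda(p)=\min_u\big(up\big)+O(\lambda)$ in the right sense, the difference of Laplacians can be controlled through the difference of the Hamiltonians plus the lower-order terms. We split
\begin{equation*}
\Delta v_{\lambda,\phi}-\Delta v=(\Delta v_{\lambda,\phi}-\Delta v_\lambda)+(\Delta v_\lambda-\Delta v)
\end{equation*}
and bound each piece separately.

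\textbf{Step 1: uniform closeness of the Hamiltonians.} For $\mathcal U=[u_{\min},u_{\max}]$ the log-partition has the closed form
\begin{equation*}
H_\lambda(p)=-\lambda\ln\frac{e^{-u_{\min}p/\lambda}-e^{-u_{\max}p/\lambda}}{p/\lambda}.
\end{equation*}
From this we will show that $H(p)-H_\lambda(p)=\lambda\ln\big(\tfrac{\lambda}{|p|}(1-e^{-(u_{\max}-u_{\min})|p|/\lambda})\big)^{-1}$. The map $p\mapsto H_\lambda(p)$ is strictly monotone, since $H_\lambda'(p)=\mathbb E_{\pi_\lambda}[u]\in[u_{\min},u_{\max}]$. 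It is also concave. The key elementary facts are therefore $|H_\lambda(p)-H_\lambda(q)|\ge u_{\min}|p-q|$ and $|H_\lambda(p)-H(p)|\le C\lambda$ on the relevant range of $p$, namely the range of $\Delta v$. The lower bound $u_{\min}$ on the derivative is exactly why $u_{\min}$ multiplies the left side of \eqref{eq:convergence-derivatives-infty-final}. The $\log|p|$ behaviour near $p=0$ will need care; I would absorb it by using that $\Delta v$ is bounded, so the relevant $p$ lie in a compact set.

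\textbf{Step 2: inverting the equation pointwise.} Subtracting \eqref{eq:classical-hjb} from \eqref{eq:exploratory-hjb-elliptic-pertubed} gives
\begin{equation*}
H_\lambda(\Delta v_{\lambda,\phi})-H_\lambda(\Delta v)=\varepsilon\widetilde R_\lambda+\rho(v_{\lambda,\phi}-v)+\nabla f\cdot\nabla(v_{\lambda,\phi}-v)+\big(H(\Delta v)-H_\lambda(\Delta v)\big).
\end{equation*}
By the mean value theorem and Step 1, the left side is at least $u_{\min}|\Delta v_{\lambda,\phi}-\Delta v|$ in absolute value. It then remains to bound $\|v_{\lambda,\phi}-v\|_{L^\infty}$ and $\|\nabla(v_{\lambda,\phi}-v)\|_{L^\infty}$ by $C(\lambda+\varepsilon)$. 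Here $f\in C^2$ keeps $\nabla f$ bounded on $\Omega$.

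\textbf{Step 3: zeroth- and first-order stability (main obstacle).} For $v_{\lambda,\phi}-v$ I would use a comparison principle for the Neumann problem. Since $\rho>0$ and $H_\lambda$ is monotone, $v_{\lambda,\phi}\pm(\varepsilon+C\lambda)/\rho$ are super- and subsolutions of the classical equation up to the $O(\lambda)$ Hamiltonian error. This yields $\|v_{\lambda,\phi}-v\|_\infty\le(\varepsilon\|\widetilde R_\lambda\|_\infty+C\lambda)/\rho$. The gradient bound is the hard part. I would linearize the difference equation as a uniformly elliptic equation
\begin{equation*}
a(\bm x)\Delta w-\nabla f\cdot\nabla w-\rho w=g,
\end{equation*}
where $a\in[u_{\min},u_{\max}]$ is obtained from the mean value theorem, $w$ is the difference, and $g=O(\lambda+\varepsilon)$ in $L^\infty$. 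Applying $W^{2,p}$ estimates for $p>d$ with Neumann data, together with Sobolev embedding, gives $\|\nabla w\|_\infty\le C(\|g\|_\infty+\|w\|_\infty)$ with $C$ depending only on $u_{\min},u_{\max},\rho,\|\nabla f\|_\infty,\Omega$, hence independent of $\lambda$ and $\varepsilon$. Uniform ellipticity with constant $u_{\min}$ is what makes this work. If that route fails, I would fall back on the $C^{2,\alpha}$ uniform bounds of \cite{TangZhangZhou2022}, plus interpolation, to control the gradient. Substituting the bounds from Steps 1 and 3 into Step 2 yields \eqref{eq:convergence-derivatives-infty-final}.
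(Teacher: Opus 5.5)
\textbf{The failing step.} The second ``key elementary fact'' of your Step~1, $|H_\lambda(p)-H(p)|\le C\lambda$ on a bounded range of $p$, is false. With $\delta=u_{\max}-u_{\min}$, the closed form gives
\[
H_\lambda(p)-H(p)=-\lambda\ln\Big(\delta\,\frac{1-e^{-s}}{s}\Big)\in\big[-\lambda\ln\delta,\;-\lambda\ln\delta+\lambda\ln(1+s)\big],\qquad s=\frac{\delta|p|}{\lambda}.
\]
So your displayed formula for $H-H_\lambda$ also has the wrong sign. More importantly, you put the difficulty in the wrong place. At $p=0$ the expression extends continuously with value $-\lambda\ln\delta$. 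For every fixed $p\neq0$, however, it equals $\lambda\ln(|p|/\lambda)+o(\lambda)$, which is of exact order $\lambda\ln(1/\lambda)$. Compactness of the range of $\Delta v$ therefore only gives $\|H(\Delta v)-H_\lambda(\Delta v)\|_{L^\infty}\le C\lambda(1+\ln\tfrac1\lambda)$. This loss enters both your comparison bound for $\|v_{\lambda,\phi}-v\|_{L^\infty}$ and the pointwise inversion in Step~2. What your argument actually proves, for $\lambda<1$, is $u_{\min}\|\Delta v_{\lambda,\phi}-\Delta v\|_{L^\infty}\le C\big(\lambda(1+\ln\tfrac1\lambda)+\varepsilon\|\widetilde R_\lambda\|_{L^\infty}\big)$, not the stated bound.

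\textbf{The gap is shared with the paper and cannot be closed.} The paper's proof uses the identical bound: Lemma~\ref{lem:estimate-g-lambda} with $\gamma=0$, invoked as $\|g_\lambda\|_{L^\infty}\le C\lambda$ in Theorem~\ref{thm:convergence-order-lambda}. The $C\lambda$ rate cannot be rescued. Since $\int_\Omega\Delta v\,dx=\int_{\partial\Omega}\partial_n v\,dS=0$, for nonconstant $f$ there are points $x_0,x_1$ with $\Delta v(x_0)=0\neq\Delta v(x_1)$. Take the exact exploratory solution ($\mathcal R_\lambda\equiv0$) and set $w=v_\lambda-v$. If $\|\Delta w\|_{L^\infty}\le C\lambda$, the Neumann $W^{2,p}$ estimate gives $\mathrm{osc}_\Omega(\rho w+\nabla f\cdot\nabla w)\le C\lambda$. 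Your Step~2 identity then forces $\mathrm{osc}_\Omega\big(H(\Delta v)-H_\lambda(\Delta v)\big)\le C\lambda$. This contradicts the values $\lambda\ln\delta$ at $x_0$ and $\lambda\ln(\lambda/|\Delta v(x_1)|)+o(\lambda)$ at $x_1$, whose difference is of order $\lambda\ln(1/\lambda)$.

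\textbf{Comparison of routes.} Modulo this logarithm, your architecture is sound and genuinely different from the paper's. The paper splits through $v_\lambda$ into two theorems, imports $\|v_\lambda-v\|_{L^\infty}$ from \cite{TangZhangZhou2022}, and absorbs the gradient with the interpolation inequality $\|\nabla w\|_{L^\infty}\le C\|w\|_{L^\infty}^{1/2}\|\Delta w\|_{L^\infty}^{1/2}$. You instead compare $v_{\lambda,\phi}$ with $v$ directly; the splitting you announce is never used. You get the $L^\infty$ bound by comparison and the gradient bound from $W^{2,p}$ theory. That step is legitimate precisely because the linearized principal part is the scalar $a(\bm x)\Delta w$ with $a\ge u_{\min}$, so you may divide by $a$ and use estimates for the Neumann Laplacian. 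Both routes tacitly assume $\partial_n v_{\lambda,\phi}=0$ exactly, which the PINN enforces only through the boundary penalty term.
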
 
The residual $\widetilde{R}_\lambda \in L^\infty(\Omega)$ is uniformly bounded when we use smooth activation (thrice-continuously differentiable) functions in the network $v_{\lambda,\phi}$. 
\begin{rem}\label{rem:order-convergence}
Here we consider the a posteriori error of the PINNs in this theorem. After training, $\widetilde{R}_\lambda$ and $\varepsilon$ can be evaluated. 
If the error from the boundary loss is negligible, the error of PINNs in second-order derivatives can be estimated as in Theorem 
\ref{thm:convergence-lambda-pertub}, based on \eqref{eq:loss_infinite}.  For the approximation (a priori) error of PINNs for nonlinear PDEs, we refer to \cite{arakelyan2024convergence}.

The estimate in Theorem 
\ref{thm:convergence-lambda-pertub} does not give a convergence order in $\lambda$. 
According to the approximation theory and the fact that $\Delta v \in C^{0,\alpha}$ with $\alpha\in (0,1)$, the best convergence order one can expect is then $\alpha$. 
In Example \ref{exm:1d-pinn-hjb}, we numerically show that the convergence order in $\lambda$ is half. 
\end{rem}

\subsection{Stable evaluation of $H_\lambda(\nabla^2 v_{\lambda,\phi})$ and $h_{\lambda,\phi}$} \label{app:h-zimp}
The log-partition term in $\mathcal R_\lambda$ and the induced noise coefficient $h_{\lambda,\phi}$ both involve
$\int_{\mathcal U}\exp(-u\,\Delta v_{\lambda,\phi}/\lambda)\,du$, which can overflow/underflow if evaluated naively.
Below, we present numerically stable evaluations. 

\begin{algorithm}[t]
  \caption{Solving eHJB Equation with PINN}
  \label{alg:exploratory-pinn}
  \begin{algorithmic}[1]
    \Require Domain $\Omega$; parameters $(\rho,\lambda,u_{\min},u_{\max})$; weights $(\alpha_{\mathrm{res}},\alpha_{\partial\Omega})$;
    iterations $T$; batch sizes $(N_\Omega,N_{\partial\Omega})$; learning rate $\{\psi_t\}$.
    \State Initialize network parameters $\phi_0$ (and optimizer state).
    \For{$t=0,1,\dots,T-1$}
      \State Sample $D_\Omega^{(t)}\subset\Omega$ and $D_{\partial\Omega}^{(t)}\subset\partial\Omega$.
      \State Compute $\mathcal L(\phi_t;D_\Omega^{(t)},D_{\partial\Omega}^{(t)})$ using \eqref{eq:residualboundary-elliptic}--\eqref{eq:loss_infinite} 
      \State Update $\phi_{t+1}\leftarrow \phi_t - \psi_t\,\nabla_\phi \mathcal L(\phi_t;D_\Omega^{(t)},D_{\partial\Omega}^{(t)})$
      (e.g., Adam/SGD/LION).
    \EndFor
    \State \Return $\phi\leftarrow \phi_T$.
  \end{algorithmic}
\end{algorithm}

\label{ssec:stable-evaluation-z-h}
Recall $\mathcal{U} = [u_{\min}, u_{\max}]$, with $u_{\max} > u_{\min} > 0$.
Define $\delta = u_{\max} - u_{\min}$ and
$z = -{\delta}{\lambda^{-1}}\Delta v_{\lambda, \phi}(\bm{x})$.
Then we have, by direct calculations,  
\begin{align}
  H_\lambda(\nabla^2 v_{\lambda,\phi}) & =  \begin{cases}-\lambda\left( \frac{u_{\min}}{\delta} z + \ln(\delta) +\ln \big( \frac{\exp(z) - 1}{z}\big) \right) & z \neq 0;\\ 
    -\lambda\ln(\delta) & z = 0.
\end{cases}\\
h_{\lambda, \phi}(\bm{x}) &= \begin{cases}
    \sqrt{
      2 \left(u_{\min} + \delta\frac{(z - 1)\exp(z) + 1}{z\left(\exp(z) - 1 \right)}
    \right) } & z \neq 0,\\
    \sqrt{u_{\max} + u_{\min}} & z = 0.
\end{cases}
\label{eq:noise-trans}
\end{align}
Let $\epsilon_0>0$ be small. For $|z|<\epsilon_0$, we evaluate $\ln ( \frac{\exp(z) - 1}{z})$ via its Taylor approximation to avoid cancellation in $\exp(z)-1$.
To implement $h_{\lambda, \phi}$, 
we approximate $\frac{(z - 1)e^z + 1}{z(e^z - 1)}$   by 
 $
\displaystyle \tfrac12 + \tfrac{z}{12} - \tfrac{z^{3}}{720} + \tfrac{z^{5}}{30240}, \text{ if } |z|<\epsilon_0$, 
and $\dfrac{(z-1)+e^{-z}}{\,z\,(1-e^{-z})\,}$, when $z>\epsilon_0$, and 
$
\dfrac{z + (z-1)\big(e^{z}-1\big)}{\,z\big(e^{z}-1\big)\,}$, when  $z<-\epsilon_0$.

\subsection{Choice of the control set $\mathcal{U}$ and the discount rate $\rho$}\label{ssec:choose-control-bound}

In Equations \eqref{eq:classical-hjb} and \eqref{eq:exp-temp-lang}, we need to choose the control set $\mathcal{U}=[u_{\min},u_{\max}]$.  As we want the noise coefficient (and the temperature $\bar{u}^*$) to be small around the global minima and large around the saddle points and local minima, we may set 
$u_{\min}$ to be zero and $u_{\max}=\infty$. However, this configuration brings some difficulties in theory and simulations. First of all, $u_{\min}=0$ leads to degenerate elliptic equations, which require further theoretical development beyond \cite{TangZhangZhou2022} in the convergence of the solution in $\lambda$. 
Second, it is difficult to set $u_{\max}=\infty$ or even a very large number in modern computers that use double precision, since the ratio $u_{\max}/u_{\min}$ must remain relatively small for numerical stability.

Our choice of the control set is based on computational efficiency and stability. In the Langevin dynamics for minimization problems, the noise coefficient $h_\lambda$ should be proportional to the gradient $\nabla f$.  Also, $h_\lambda$ should be close to zero around global minima and not too large for computational efficiency. 
Moreover, $u_{\min}$ should not be too close to zero to avoid instability.
If $\nabla f$ is close to zero, a very small noise in the discrete Langevin dynamics will cause oscillations in the dynamics and thus deteriorate the convergence of the Langevin dynamics. 
Combining all these considerations, we set 
$u_{\min}>0$   and pick $u_{\max}$ to be proportional to the magnitude of $\nabla f$.
In practice, we use an estimated magnitude of 
$\nabla f$. Specifically,
 we draw $S$ samples $x^{(l)}\sim \mathrm{Unif}(\Omega)$ and define
\begin{equation}\label{eq:kappa-beta}
\kappa := \frac{1}{2}\Big(\max_{1\le l\le S}\|\nabla f(x^{(l)})\|_\infty\Big)^2,
\qquad
u_{\max} := C_\kappa\,\kappa,\;\; C_\kappa>0.
\end{equation}
By \eqref{eq:noise-magnitude},   $h_\lambda\leq \sqrt{2u_{\max}}=\sqrt{C_\kappa \kappa}$. 
While we don't have a rule to set the lower bound $u_{\min}$, we find that
$u_{\min}\leq 10^{-2}$ and $u_{\min}\geq 10^{-8}$ ($h_\lambda\geq \sqrt{2u_{\min}}\geq 10^{-4}$) suffices in all our examples in Section \ref{sec:numerical-examples}.  

From the definition 
\eqref{eq:value-function}, we can readily observe that 
the discount rate $\rho$ is 
related to the scaling of `time' $t$. With a change of variable $\rho t =s$, the scaling $\rho$ will appear as coefficients in the drift and diffusion coefficient in the Langenvin dynamics \eqref{eq:opt-sde-rigid} and \eqref{eq:opt-sde} and later in the discrete Lagenvin dynamics 
\eqref{eq:truncated}. 
In this work, we will not study the effect of the discount rate $\rho$ theoretically. Instead, we keep $\rho $ at the order of one and test the effect of $\rho$ in numerical examples of Section \ref{sec:numerical-examples}.

\subsection{Langevin dynamics with approximated temperature}
\label{ssec:langevin-pinn-implementation}
Given   $v_{\lambda,\phi}$, we evaluate the induced
noise coefficient $h_{\lambda,\phi}$ in  \eqref{eq:noise-magnitude} and define the
state-dependent Langevin dynamics
\begin{equation}
    d X^*(t)=-\nabla f\left(X^*(t)\right)\, d t+h_{\lambda, \phi}\left(X^*(t)\right)\, d W_t,
    \quad X^*(0)= \bm{x}.
    \label{eq:opt-sde-pinn}
\end{equation}
%
Near the minimizers, $h_{\lambda,\phi}$ may remain small but nonzero,
which can lead to persistent oscillations. To reduce such oscillations, we introduce the truncated coefficient
\begin{equation}\label{eq:truncated-h}
  h_{\lambda, \phi}^\tau(\bm{x}) \;=\;
    h_{\lambda, \phi}(\bm{x})\,\mathbf{1}_{\{h_{\lambda,\phi}(\bm{x})\ge \tau>0\}}.
\end{equation}
The truncation \eqref{eq:truncated-h}--\eqref{eq:truncated} is a numerical stabilization heuristic that suppresses weak noise below $\tau$
and improves empirical convergence (see Example \ref{exm:double-well}).
We use the Euler--Maruyama scheme to obtain the discrete Langevin dynamics
\begin{equation}
\label{eq:truncated}
X_{k+1}^* \;=\; X_k^* \;-\; \eta\,\nabla f(X_k^*)
\;+\; \sqrt{\eta}\,h_{\lambda, \phi}^\tau(X_k^*)\,\xi_k,\, X^*(0)=\bm{x},
\quad \xi_k\sim\mathcal N(0,I_d).
\end{equation}

 %
To enforce $X^*_k\in\Omega$, we use mirror reflection. In this work,  we use 
$\Omega=\prod_{i=1}^d[a_i,b_i]$ and apply a coordinate-wise mirror map after each iteration, see 
Algorithm~\ref{alg:mirror-boundary}.

\begin{algorithm}[H]
\caption{State-dependent Langevin dynamics on $\Omega$ (with mirror reflection)}
\label{alg:mirror-boundary}
\begin{algorithmic}[1]
\State \textbf{Input:} step size $\eta>0$; horizon $T$; $X^*_0\in\Omega$; $f$, $\nabla f$; $h_{\lambda, \phi}$ (or $h^\tau_{\lambda, \phi}$).
\For{$k=0,1,\dots,T-1$}
  \State Sample $\xi_k \sim \mathcal{N}(0,I_d)$.
  \State Compute $X_{k+1}^*$ by \eqref{eq:truncated}.
  \State \textbf{Mirror reflection:} $X^*_{k+1} \leftarrow \mathrm{Mirr}_{\Omega}(X_{k+1}^*)$ with
  \[
    \big(\mathrm{Mirr}_{\Omega}(\bm{x})\big)_i =
    \begin{cases}
      a_i + w_i r_i, & \lfloor z_i \rfloor \text{ even},\\
      b_i - w_i r_i, & \lfloor z_i \rfloor \text{ odd},
    \end{cases}\;
\substack{
    w_i=b_i-a_i,\;\\
    z_i=\frac{x_i-a_i}{w_i},\;}
    r_i=z_i-\lfloor z_i \rfloor\in[0,1).
  \]
  \State Record $X^*_{k+1}$ and $f(X^*_{k+1})$.
\EndFor
\State \textbf{Output:} $\{X^*_k\}_{k=0}^T$ and $\{f(X^*_k)\}_{k=0}^T$.
\end{algorithmic}
\end{algorithm}



We now summarize our approach in Algorithm~\ref{alg:nonconvex-optimization} 
for solving minimization problems.
For the objective $f$, we first solve the eHJB equation with  PINNs and loss  \eqref{eq:loss_infinite} to obtain $\Delta v_{\lambda,\phi}$ and then compute the induced temperature  $h_{\lambda,\phi}^\tau$ \eqref{eq:truncated-h}; finally, we run a state-dependent Langevin dynamics
\eqref{eq:truncated}
on $\Omega$ with mirror reflection on the boundary to generate candidate global minimizers.

\begin{algorithm}[t]
\caption{Solving Non-Convex Optimization via Exploratory HJB Equations and PINNs}
\label{alg:nonconvex-optimization}
\begin{algorithmic}[1]
\Require $f$ and $\nabla f$ on $\Omega$; $\mathcal U=[u_{\min},u_{\max}]$; $(\rho,\lambda)$; PINN settings; Langevin settings $(\eta,T,\tau)$.
\Ensure Approximate minimizer of $f$ and state-dependent temperature $h_{\lambda,\phi}$.
\State Solve the eHJB equation on $\Omega$ with PINN (Algorithm~\ref{alg:exploratory-pinn}) to obtain $v_{\lambda,\phi}$ and $\Delta v_{\lambda,\phi}$.
\State Compute $Z_{\lambda,\phi}$ and the noise coefficient $h_{\lambda,\phi}^\tau$ from $\Delta v_{\lambda,\phi}$.
\State Run the state-dependent Langevin iterations on $\Omega$ with mirror reflection (Algorithm~\ref{alg:mirror-boundary}) to generate $\{X_k^*\}_{k=0}^T$.
\State \Return best candidate point in the trajectory, e.g.\ $\arg\min_k f(X_k^*)$ or reaching the maximum iteration.
\end{algorithmic}
\end{algorithm}


\section{Numerical Examples}\label{sec:numerical-examples}

In this section, we present a sequence of tests: in Examples \ref{exm:1d-pinn-hjb} and \ref{exm:double-well},  we first validate the PINN solver on a stationary eHJB equation  and the computed noise coefficient; 
then we apply Algorithm \ref{alg:mirror-boundary}  to four benchmark minimization problems. 
We summarize the key notation of this section in Table \ref{tab:notation_numerical_examples}.

\begin{table}[t]
\centering
\caption{Key notation in Section~\ref{sec:numerical-examples}.}
\label{tab:notation_numerical_examples}
\footnotesize
\setlength{\tabcolsep}{4pt}
\renewcommand{\arraystretch}{1.05}
\begin{tabular}{ll}
\hline
\textbf{Symbol} & \textbf{Meaning} \\
\hline
$N_{\mathrm{traj}}$ & number of simulated trajectories in Algorithm~\ref{alg:mirror-boundary} \\
$k$ & Langevin iteration index in Algorithm~\ref{alg:mirror-boundary}, $0\leq k\leq T-1$ \\
$\hat f_k$ & trajectory-averaged objective at iteration $k$, \ \eqref{eq:evol-eval} \\
$\mathcal{E}(\bm{x})$ & distance to the (global or specified target) minimizer set \\
$\mathcal{E}_k$ &  $\mathcal{E}(\bm{x})$ at iteration $k$, \ \eqref{eq:evol-eval} \\
$\kappa$ & gradient-based reference scale used to set $u_{\max}$, \ \eqref{eq:kappa-beta} \\
$C_\kappa$ & scaling factor in $u_{\max}=C_\kappa\,\kappa$ \\
$s$ & truncation fraction in $\tau=s\sqrt{2u_{\max}}$ \\
\hline
\end{tabular}
\end{table}

For all the examples of minimization, we run Algorithm~\ref{alg:mirror-boundary} with $N_{\mathrm{traj}}=100$\footnote{We also experiment with $N_{traj} = 1000$ and obtain the similar results.} trajectories and uniformly sample the initial state in 
\eqref{eq:truncated} on $\Omega$.
We take $T=1000$ in the discrete Langevin dynamics \eqref{eq:truncated}. 
Define, at 
the $k$-th iteration in \eqref{eq:truncated},
\begin{equation}\label{eq:evol-eval}
\hat f_k := \frac{1}{N_{\mathrm{traj}}}\sum_{j=1}^{N_{\mathrm{traj}}} f(X_{j,k}^*),
\qquad
\mathcal{E}_k := \frac{1}{N_{\mathrm{traj}}}\sum_{j=1}^{N_{\mathrm{traj}}} \mathcal{E}(X_{j,k}^*),  1\leq k\leq T,
\end{equation}
where $\mathcal{E}(\bm{x}):=\min_{1\le i\le q}\|\bm{x}-\bm{x}_{\star,i}\|_2$, and $\{\bm{x}_{\star,i}\}_{i=1}^q$ are the global minima (or a specified target minimizer for a given instance). To estimate $u_{\max}$, we set $S=2000$ in \eqref{eq:kappa-beta}. 
In \eqref{eq:truncated-h}, we set $\tau=s\sqrt{2u_{\max}}$ with $s\in(0,1]$ for the truncation of the noise coefficient.
%

In all runs of PINNs, we use float64 and a fully connected network with 5 hidden layers, width 64, and the $\tanh$ activation function. We use automatic differentiation to obtain gradients and Hessians. All networks are initialized with i.i.d. uniform random weights scaled by the square root of the layer input size, and with zero biases.
We train for $20000$ iterations using loss \eqref{eq:loss_infinite} with  $N_\Omega=16384$ and the LION optimizer~\cite{2023arXiv230206675C}.
Boundary collocation points depend on the domain geometry and are specified in each example.
At each iteration,  collocation points are resampled uniformly in $\Omega$ and on $\partial\Omega$.
We fix $\alpha_{\mathrm{res}}=1/\rho$ and $\alpha_{\partial\Omega}=50$ in \eqref{eq:loss_infinite}.

\begin{exm}[Testing PINNs for eHJB equation]
\label{exm:1d-pinn-hjb}
Consider the stationary equation
\begin{equation}\label{eq:exp-hjb-solve}
-\rho\, v_\lambda(\bm{x}) + g(\bm{x}) + H_\lambda (\nabla^2 v_{\lambda}(\bm{x}))
 = 0,\, \bm{x} \in \Omega; \,
\nabla_{\bm{x}} v_\lambda(\bm{x}) \cdot \bm{n}(\bm{x}) = 0,\, \bm{x} \in \partial \Omega,
\end{equation}
where $\Omega=[-3,3]^d$, $\bm{x}=(x_1,\dots,x_d)^\top$, and $\rho=1$, and $\mathcal U=[u_{\min},u_{\max}]$ with  $u_{\min}=0.2$, and $u_{\max}=1$. 
\end{exm}
The corresponding classical HJB equation reads 
\begin{equation}\label{eq:hjb-solve}
-\rho\, v(\bm{x}) + g(\bm{x}) + \min_{u\in\mathcal U}\bigl(u\,\Delta v(\bm{x})\bigr)=0,\qquad \bm{x}\in\Omega,
\quad
\nabla v(\bm{x})\cdot \bm n(\bm{x})=0,\quad \bm{x}\in\partial\Omega.
\end{equation}
We pick $
v(\bm{x})=\prod_{i=1}^d \cos({\pi x_i}/{3})$,
and  $g(\bm{x})$ can be found according to Equation \eqref{eq:hjb-solve}.

In this example, we test whether PINNs can obtain the Laplacian of the solutions to this HJB  and corresponding eHJB equations \footnote{In this example, the equation is different from \eqref{eq:exp-temp-lang} but the error estimate in Theorem \ref{thm:convergence-lambda-pertub} carries over.} accurately.  
Let $v_{\lambda,\phi}$ be the PINN solution of \eqref{eq:exp-hjb-solve}.
We measure the accuracy in $\Delta v_{\lambda,\phi}$ on uniformly sampled test points $\{\bm{x}_i\}_{i=1}^{N_{\mathrm{test}}}$ within $\Omega$ using the relative discrete $l_2$-error and the  $l_\infty$-error
\[
e_{l^2,r}
=
\frac{\Big(\sum_{i=1}^{N_{\mathrm{test}}}|\Delta v(\bm{x}_i)-\Delta v_{\lambda,\phi}(\bm{x}_i)|^2\Big)^{1/2}}
{\Big(\sum_{i=1}^{N_{\mathrm{test}}}|\Delta v(\bm{x}_i)|^2\Big)^{1/2}},
\quad
e_{l^\infty}=\max_{1\le i\le N_{\mathrm{test}}}|\Delta v(\bm{x}_i)-\Delta v_{\lambda,\phi}(\bm{x}_i)|.
\]

We set the number of the boundary collocation points $N_{\partial\Omega}=1024$ in  \eqref{eq:loss_infinite};
and $\lambda=0.32\times {2^{-p}}, p = 0, 1,\ldots,4$.  

For $d=1$, we present in Table~\ref{tab:laplacian-l_infty} the discrete maximum error $e_{l^\infty}$ and the $l_\infty$ residual on test points $\{\bm{x}_i\}_{i=1}^{N_{\mathrm{test}}}$ and  
$\varepsilon(\lambda) := \max_{1 \leq i \leq N_{\mathrm{test}}} \big|\mathcal R_\lambda(\bm{x}_i;\phi)\big|$.
%
We also present the ratio $e_{l^\infty}(2 \lambda)/e_{l^\infty}(\lambda)$, which is close to $\sqrt{2}$. This suggests a half-order convergence in $\lambda$, which aligns with the expectation outlined in Remark \ref{rem:order-convergence}.  In contrast, the ratio ${\varepsilon}(2 \lambda)/{\varepsilon}(\lambda)$  does not exhibit a monotone decrease.  However, the error ${\varepsilon}$ from PINNs is smaller than the error induced by the regularization parameter $\lambda$. 
 This suggests that  $\lambda$ is the primary source of error and  \textit{training PINNs to high accuracy is unnecessary}.

We present the relative $l_2$ errors in Table~\ref{tab:laplacian-relL2},  when $d=1,2,4$. 
The relative errors herein suggest that 
$\Delta v_{\lambda,\phi}$ preserves the overall spatial structure of $\Delta v$ required in the state-dependent noise coefficient 
$h_\lambda$.

\begin{figure*}[t]
\centering

\begin{minipage}[t]{0.48\textwidth}\vspace{0pt}
\centering
\includegraphics[width=\linewidth]{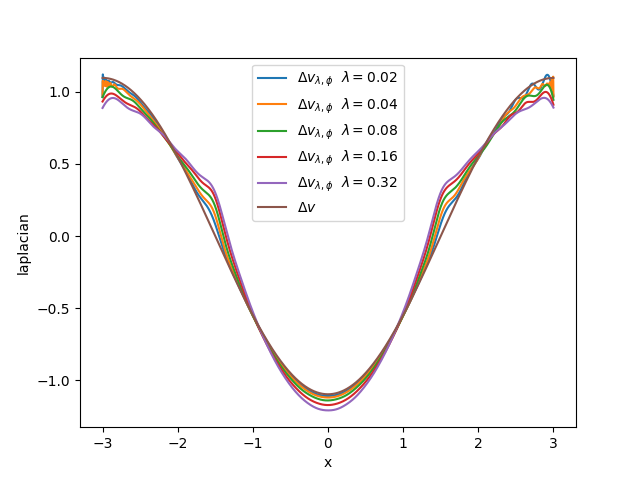}
\captionsetup{font=small} 
\caption{
    One-dimensional test: $\Delta v$ and $\Delta v_{\lambda,\phi}$ for
    $\lambda =\frac{0.32}{2^{p}},\,p=0,1,\ldots, 4$.
}
\label{fig:laplacian-d1}
\end{minipage}%
\hfill%
\begin{minipage}[t]{0.50\textwidth}\vspace{0pt}
\centering

\captionsetup{type=table,font=footnotesize} 

{\footnotesize
\setlength{\tabcolsep}{2pt}   
\renewcommand{\arraystretch}{1.0} 

\captionof{table}{Exploration level $\lambda$, discrete $l_\infty$ error $e_{l^\infty}$ between $\Delta v$ and $\Delta v_{\lambda,\phi}$ and corresponding residual ${\varepsilon}$ for $d = 1$ and $N_{\mathrm{test}} = 4096$.}
\label{tab:laplacian-l_infty}
\vspace{0.5ex}
\begin{tabular}{cccccc}
\hline
$\lambda$ & 0.32 & 0.16 & 0.08 & 0.04 & 0.02 \\
$e_{l^\infty}$ & 0.304 & 0.265 & 0.208 & 0.149 & 0.124 \\
$\frac{e_{l^\infty}(2 \lambda)}{e_{l^\infty}(\lambda)}$ & -- & 1.145 & 1.277 & 1.398 & 1.195 \\
${\varepsilon}$ & 0.026 & 0.025 & 0.022 & 0.020 & 0.023 \\
$\frac{{\varepsilon}(2 \lambda)}{{\varepsilon}(\lambda)}$ & -- & 1.027 & 1.132 & 1.088 & 0.893 \\
\hline
\end{tabular}

\vspace{1.2ex}

\captionof{table}{Relative  error $e_{l^2,r}$,  $N_{\mathrm{test}}=4096$.}

\label{tab:laplacian-relL2}
\vspace{0.5ex}
\begin{tabular}{cccccc}
    \hline
    $d$ & $\lambda = 0.32$ & $\lambda = 0.16$ & $\lambda = 0.08$ & $\lambda = 0.04$ & $\lambda = 0.02$ \\
    \hline
    1 & 0.167 & 0.134 & 0.094 & 0.059 & 0.036 \\
    2 & 0.203 & 0.152 & 0.101 & 0.062 & 0.036 \\
    4 & 0.255 & 0.196 & 0.141 & 0.110 & 0.102 \\
    \hline
\end{tabular}
} 

\end{minipage}

\end{figure*}

\begin{exm}[1D double well, \cite{GaoXuZhou2022}, evaluating noise coefficient]\label{exm:double-well}
Consider the nonconvex objective on $\Omega=[-6,6]$, $
f(\bm{x})=
(4x-20)\mathbf{1}_{x>6}
+(x-4)^2\mathbf{1}_{2<x\le 6}
+(8-x^2)\mathbf{1}_{-2<x\le 2}
+\bigl(2(x+3)^2+2\bigr)\mathbf{1}_{-6<x\le -2}
-(12x+52)\mathbf{1}_{x\le -6}
$, which admits a global minimizer at $x=4$ and a secondary local minimum at $x=-3$.
\end{exm}

We obtain $\kappa=71$ from \eqref{eq:kappa-beta} and fix $\rho=0.4$, $u_{\min}=0.2$, and $u_{\max}=2\kappa=142$.
As in the last example, we evaluate $v_{\lambda,\phi}''$ and $h_{\lambda,\phi}$ for
$\lambda =0.01\times  2^{p}$, $p=0,1,\ldots\,5$.

We compute a reference solution of the classical HJB equation~\eqref{eq:classical-hjb}
on $\Omega$ using a monotone finite-difference discretization (central differences for $v''$
and upwinding for the drift term), solved by Howard policy iteration, e.g., in \cite{MR2551155}.  
The reference  $v''$ is obtained by applying the same
central-difference operator to the converged grid solution, and the corresponding classical
noise coefficient is $\sqrt{2\bar u^*}$ from~\eqref{eq:class-opt-control}.

From
Figure~\ref{fig:ex1-three}(b), we obtain that    $v_{\lambda,\phi}''$ (from PINNs) closely matches the
reference   $v''$ for various $\lambda$'s. Moreover, $h_{\lambda,\phi}$ reproduces the qualitative shape of
$\sqrt{2\bar u^*}$ and attains its minimum near the global minimizer $x=4$; see Figure~\ref{fig:ex1-three}(c). 

\begin{figure}[t]
  \centering
  \begin{minipage}[t]{0.32\textwidth}
    \centering
    \includegraphics[width=\textwidth]{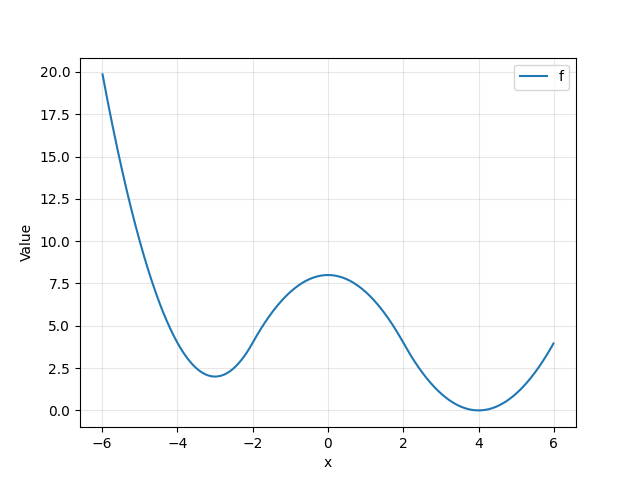}
    {\scriptsize (a) Objective $f$}
  \end{minipage}\hfill
  \begin{minipage}[t]{0.32\textwidth}
    \centering
    \includegraphics[width=\textwidth]{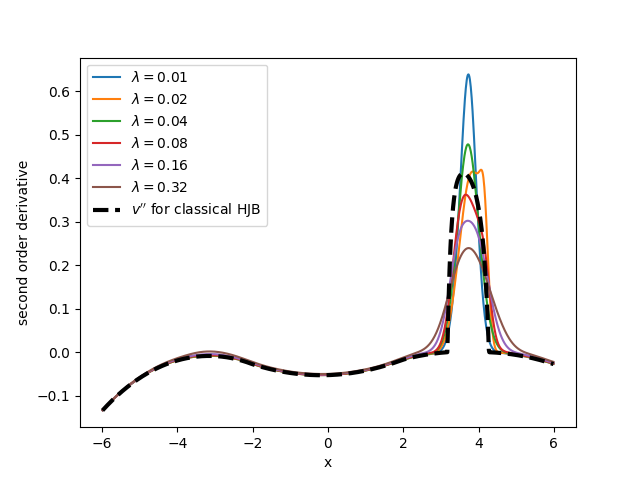}
    {\scriptsize (b) $v_{\lambda,\phi}''$ vs.\ $v''$}
  \end{minipage}\hfill
  \begin{minipage}[t]{0.32\textwidth}
    \centering
    \includegraphics[width=\textwidth]{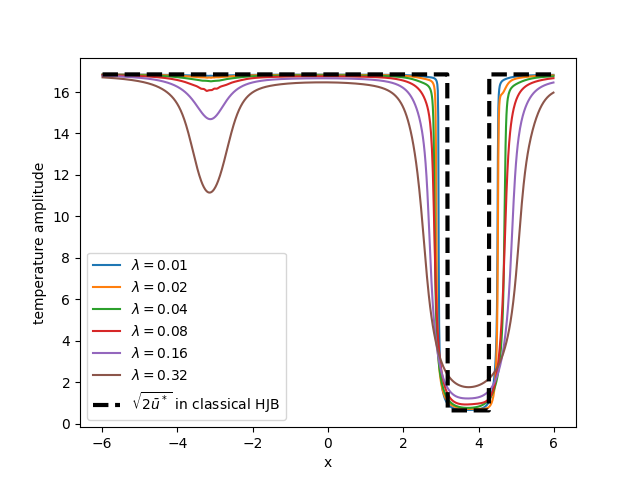}
    {\scriptsize (c) $h_{\lambda,\phi}$ vs.\ $\sqrt{2\bar{u}^*}$}
  \end{minipage}

  \caption{\scriptsize Example~\ref{exm:double-well} (1D double-well). (a) Objective $f$.
  (b) $v_{\lambda,\phi}''$ for multiple $\lambda$ compared with the reference $v''$ (Appendix~\ref{app:fd-howard}).
  (c) $h_{\lambda,\phi}$ compared with $\sqrt{2\bar{u}^*}$ from~\eqref{eq:class-opt-control}.}
  \label{fig:ex1-three}
\end{figure}

In Algorithm \ref{alg:mirror-boundary}, we take the step size $\eta=0.016$.
In this example, $h_{\lambda,\phi}$ remains positive near $x=4$. Thus, without truncation ($\tau=0$ in \eqref{eq:truncated-h}), 
the dynamics oscillates  and the trajectory-averaged objective $\hat f_k$ exhibits sustained large fluctuations
(Figure~\ref{fig:1d-trunc-opt}, left). With truncation $\tau=\tfrac12\sqrt{2u_{\max}}$, the trajectory are 
stable and converge for all tested $\lambda$'s within $200$ iterations (Figure~\ref{fig:1d-trunc-opt}, right).
Thus, accurate recovery of the \emph{shape} of the curvature/noise coefficient is sufficient for the considered minimization problem and the  truncation
provides a simple mechanism for stable minimization.

\begin{figure}[t]
  \centering
  \resizebox{0.9\textwidth}{!}{%
    \begin{minipage}[t]{0.49\textwidth}
      \centering
      \includegraphics[width=\linewidth]{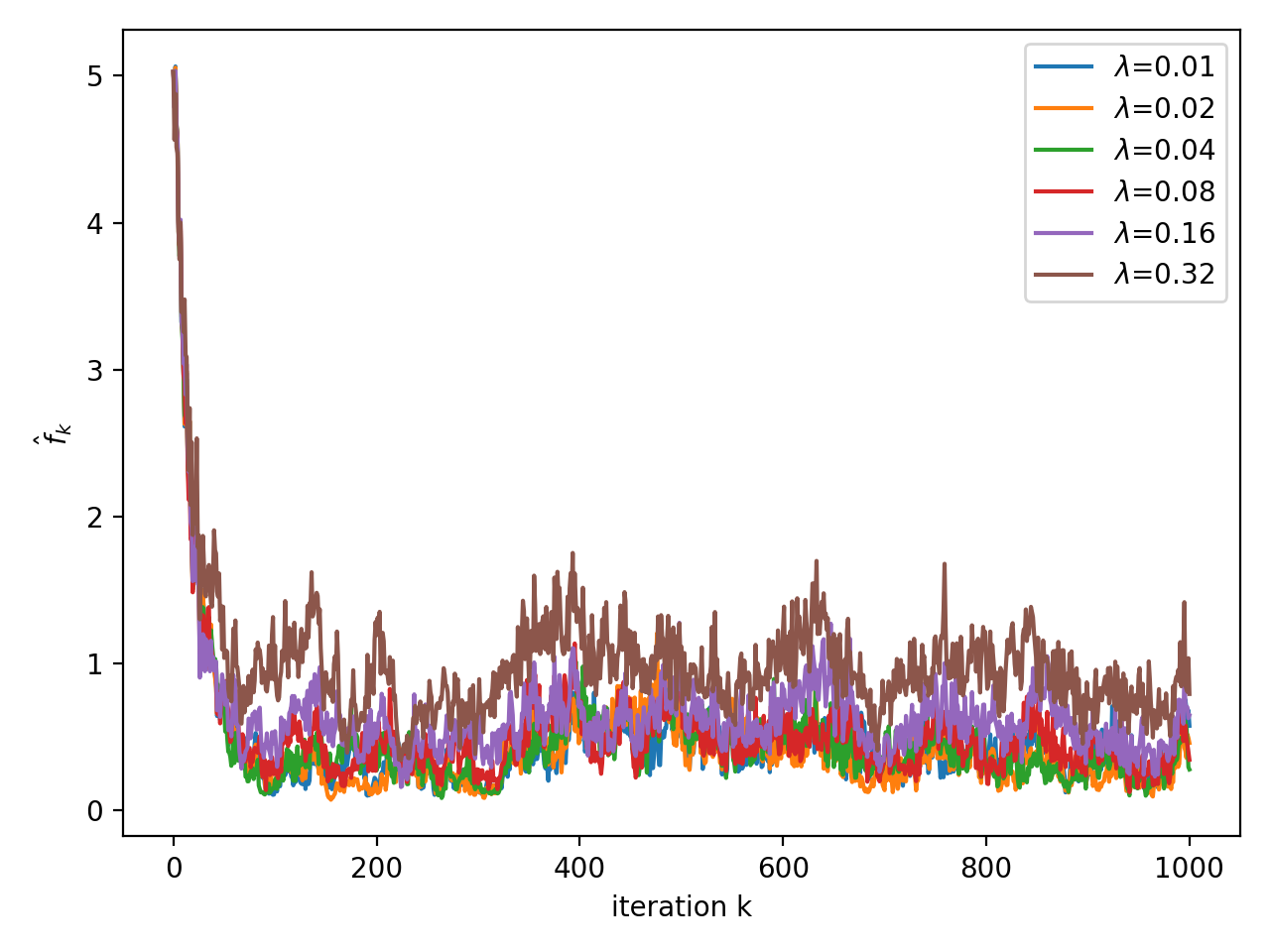}
    \end{minipage}\hfill
    \begin{minipage}[t]{0.49\textwidth}
      \centering
      \includegraphics[width=\linewidth]{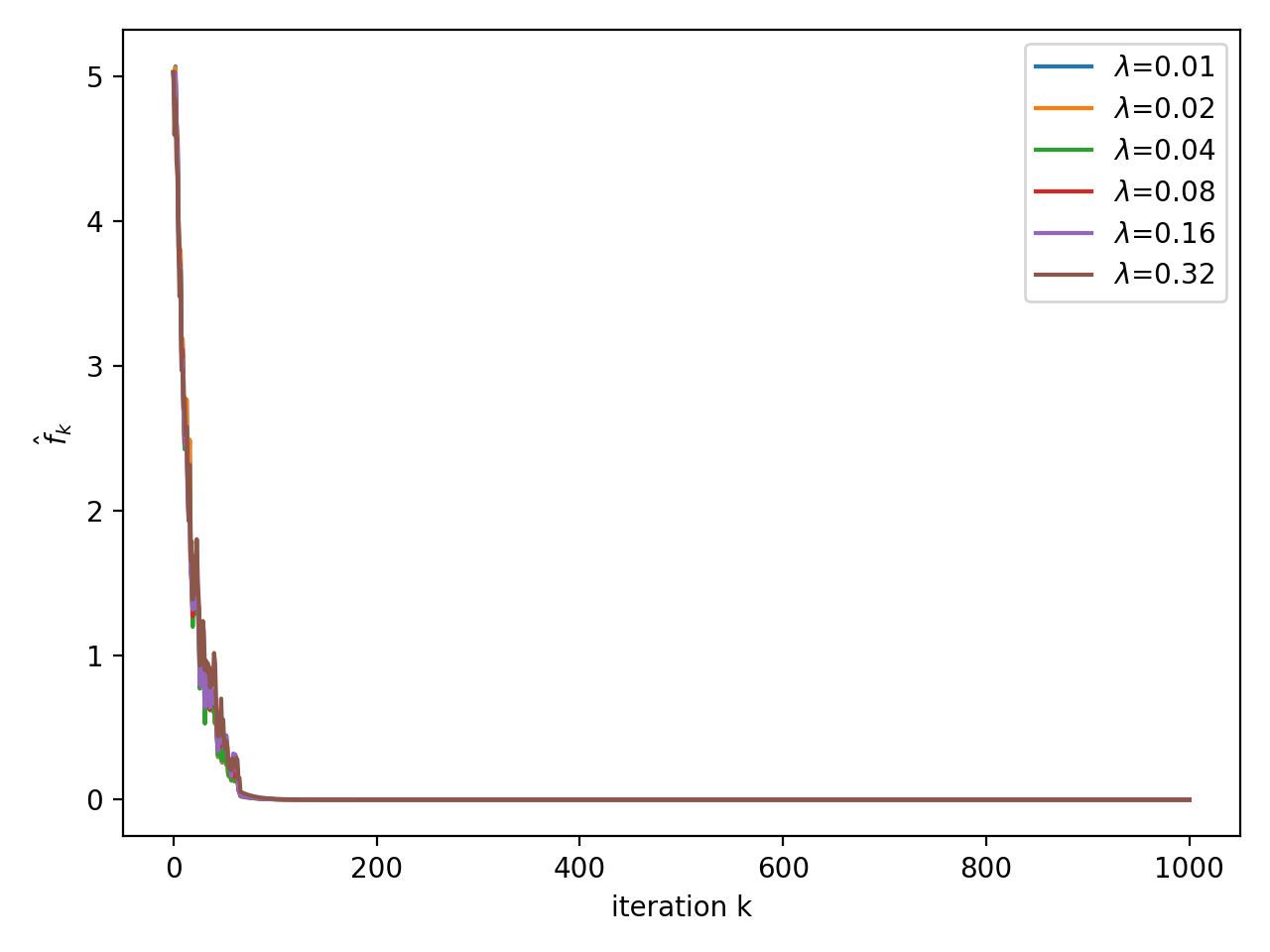}
    \end{minipage}%
  }
  \caption{\scriptsize 1D double-well: $\hat f_k$ (100-trajectory average) for multiple $\lambda$.
  Left: $\tau=0$. Right: $\tau=\tfrac12\sqrt{2u_{\max}}$.}
  \label{fig:1d-trunc-opt}
\end{figure}

\begin{exm}[2D Gaussian mixture, \cite{dong2021replicaexchangenonconvexoptimization}]
\label{exm:2d-gauss-mix}
Let  $\Omega=[-1,5]^2 $. 
Consider the Gaussian-mixture objective  \begin{equation} \label{eq:gauss_template}
f(\bm{x})
=
-\sum_{i=1}^M w_i \exp \left(
-\frac{1}{2}\left(\bm{x}-\bm{m}_i\right)^T \Sigma_i^{-1}\left(\bm{x}-\bm{m}_i\right)
\right),
\end{equation}  where $M=25$
and  $\{\bm m_i\}_{i=1}^{25}$ are placed on the grid $\{0,1,2,3,4\}^2$ (row-major order),
the covariances are $\Sigma_i=0.1\,I_2$, and
the mixture weights $\{w_i\}$ are listed in Table~\ref{tab:gm2d-mixture-wide}. In 
Figure~\ref{fig:gm2d-contour-surface}, we plot the function and observe multiple wells and saddle regions; the deepest well is associated with the largest weight, where $\bm{x}_{*}=(3,2)$.
\end{exm}

\begin{table}[t]
\centering
\caption{Component means $\bm m_i$ and weights $w_i$ for Example~\ref{exm:2d-gauss-mix} (row-major order on $\{0,1,2,3,4\}^2$).}

\label{tab:gm2d-mixture-wide}
\small

\scalebox{0.85}{
\setlength{\tabcolsep}{3pt}
\begin{tabular}{r c r @{\hspace{1em}} r c r @{\hspace{1em}} r c r @{\hspace{1em}} r c r @{\hspace{1em}} r c r}
\toprule
$i$ & $m_i$ & $w_i$ & $i$ & $m_i$ & $w_i$ & $i$ & $m_i$ & $w_i$ & $i$ & $m_i$ & $w_i$ & $i$ & $m_i$ & $w_i$ \\
\midrule
1  & (0,0) & 0.4559 & 2  & (0,1) & 0.2559 & 3  & (0,2) & 0.3089 & 4  & (0,3) & 0.2974 & 5  & (0,4) & 0.2947 \\
6  & (1,0) & 0.4972 & 7  & (1,1) & 0.5326 & 8  & (1,2) & 0.3268 & 9  & (1,3) & 0.4997 & 10 & (1,4) & 0.5220 \\
11 & (2,0) & 0.4020 & 12 & (2,1) & 0.3167 & 13 & (2,2) & 0.5011 & 14 & (2,3) & 0.3068 & 15 & (2,4) & 0.4747 \\
16 & (3,0) & 0.4392 & 17 & (3,1) & 0.5339 & 18 & (3,2) & \textbf{1.6552} & 19 & (3,3) & 0.4931 & 20 & (3,4) & 0.4037 \\
21 & (4,0) & 0.3124 & 22 & (4,1) & 0.2915 & 23 & (4,2) & 0.3972 & 24 & (4,3) & 0.4242 & 25 & (4,4) & 0.2974 \\
\bottomrule
\end{tabular}
}
\end{table}
In this example, we study the effects of the control parameters
$(u_{\min},u_{\max},\rho,\lambda)$ on the resulting Langevin dynamics. Specifically, we  
vary one of the control parameters at a time while freezing the rest. The goal is to identify which choices most affect convergence to the global minimizer to guide the choices in subsequent examples.

We set  $N_{\partial \Omega} = 128$ in  \eqref{eq:loss_infinite} and  obtain $\kappa=4.1$ from \eqref{eq:kappa-beta}.
In Algorithm~\ref{alg:mirror-boundary}, we use the step size $\eta=0.016$.

\begin{figure}[t]
  \centering
  \resizebox{0.9\textwidth}{!}{%
    \begin{minipage}[t]{0.49\textwidth}
      \centering
      \includegraphics[width=\linewidth]{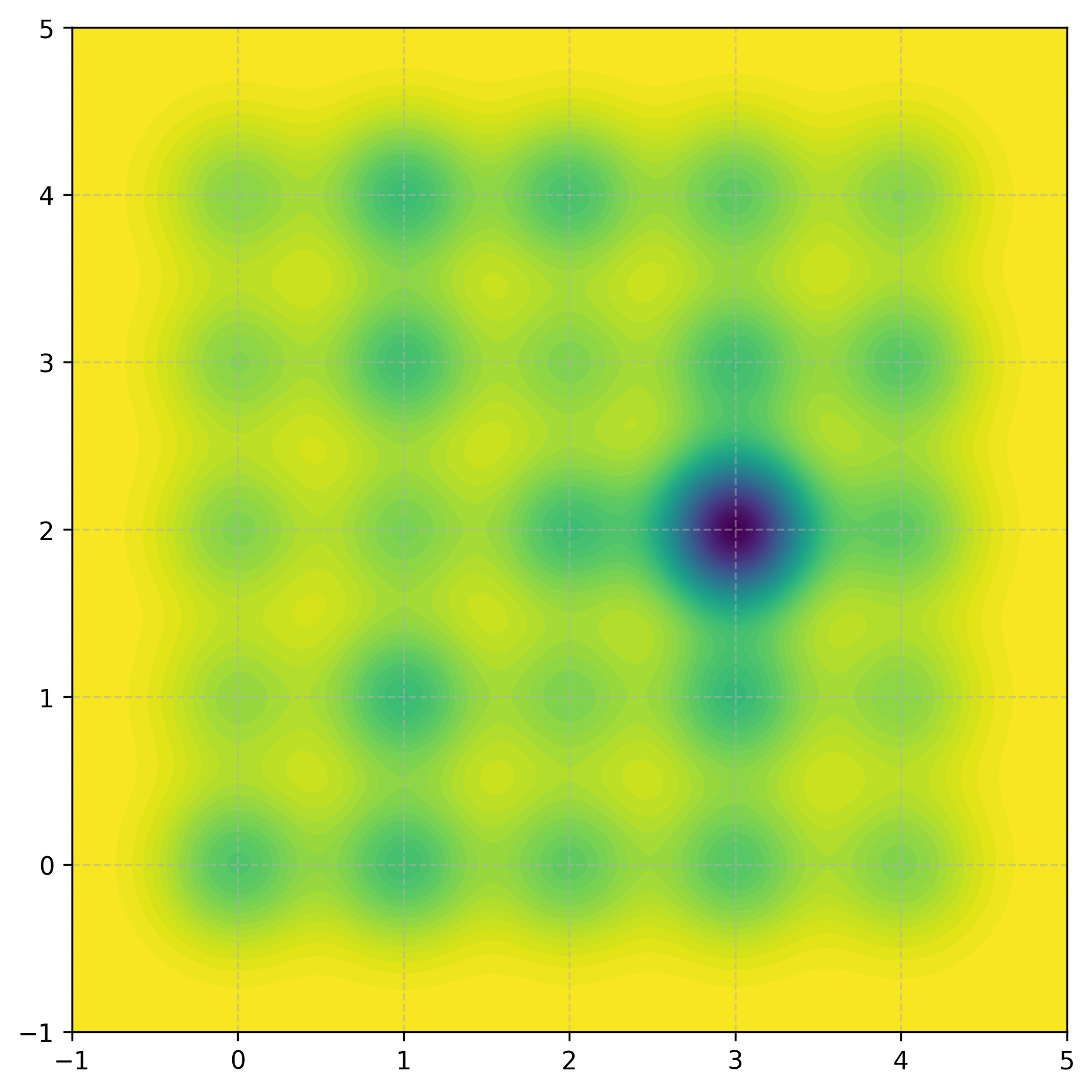}
    \end{minipage}\hfill
    \begin{minipage}[t]{0.49\textwidth}
      \centering
      \includegraphics[width=\linewidth]{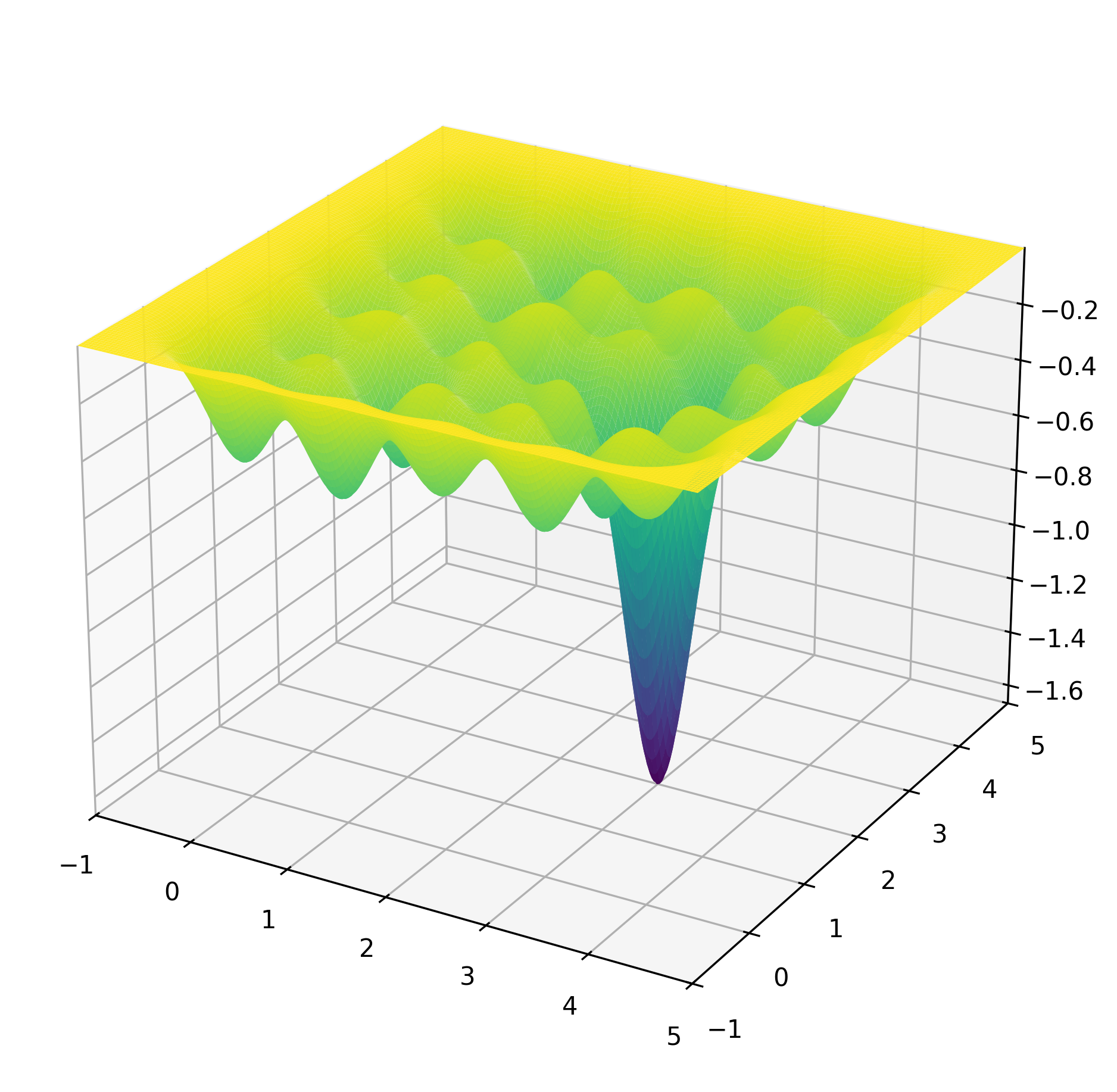}
    \end{minipage}%
  }
  \caption{\scriptsize Example~\ref{exm:2d-gauss-mix}: 2D Gaussian mixture on $\Omega=[-1,5]^2$ (contour and surface).
The landscape contains multiple wells; the deepest well is near $(3,2)$. Left: contour plot of $f$; right: surface plot.}
  \label{fig:gm2d-contour-surface}
\end{figure}

We first test the \textit{effects of the control bounds $u_{\min}$ and $u_{\max}$}.
In Figure~\ref{fig:exp-2d-umin-umax} (left), we plot the  error $\mathcal{E}_k$
(over $N_{\mathrm{traj}}=100$ trajectories) versus the Langevin iteration index $k$,
fixing $(\lambda,\rho,u_{\max},\tau)=(0.04,1.6,4\kappa,0)$ and varying
$u_{\min}=10^{-8},10^{-6},10^{-4},10^{-2}$.
In all tested cases, $\mathcal{E}_k\le 0.1$ by $k=500$ and remains below this level thereafter.
In Figure~\ref{fig:exp-2d-umin-umax} (right), we vary $u_{\max}= \kappa\times 2^{p} , p =-2,-1,0,1,2,3,4$,
while fixing $(u_{\min},\lambda,\rho,\tau)=(10^{-8},0.04,1.6,0)$. The numerical results 
show that a larger $u_{\max}$ leads to faster convergence. Specifically, for $u_{\max}\ge \kappa$, we have $\mathcal{E}_k\le 0.15$ by $k=500$ and continued improvement thereafter.
When $u_{\max}=\kappa/2$, we observe $\mathcal{E}_k\approx 0.399$ by $k = 500$ and about $0.132$ at the final iteration. 
When $u_{\max}=\kappa/4$, the error $\mathcal{E}_k$ remains large ($\mathcal{E}_k\approx 1.131$ by $k = 500$ and about $0.662$ at the end). This figure suggests that choosing $u_{\max}\geq  \kappa$ yields faster convergence while 
$u_{\min}$ should be kept close to zero.

\begin{figure}[t]
  \centering
  \resizebox{0.9\textwidth}{!}{%
    \begin{minipage}[t]{0.49\textwidth}
      \centering
      \includegraphics[width=\linewidth]{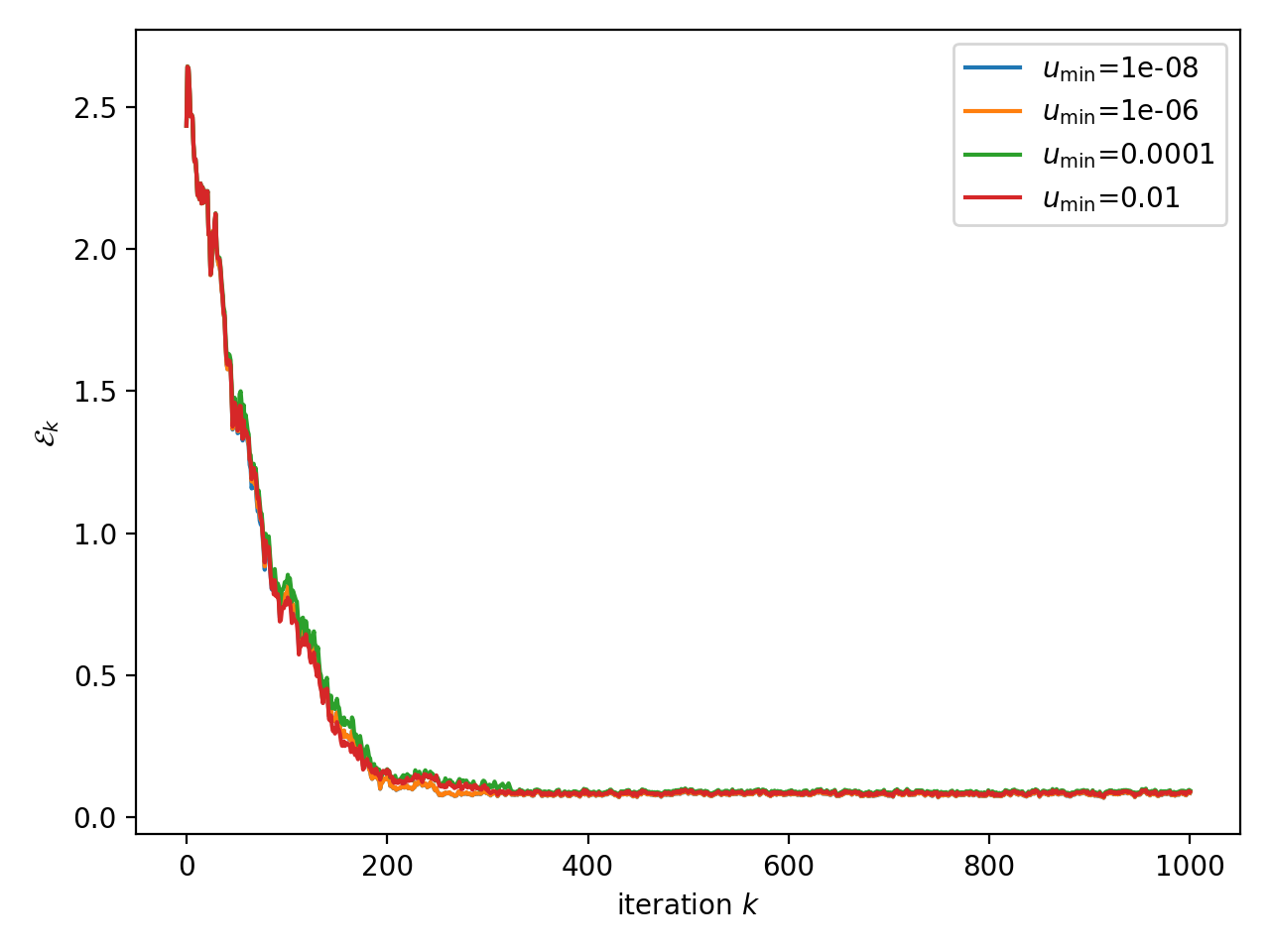}
    \end{minipage}\hfill
    \begin{minipage}[t]{0.49\textwidth}
      \centering
      \includegraphics[width=\linewidth]{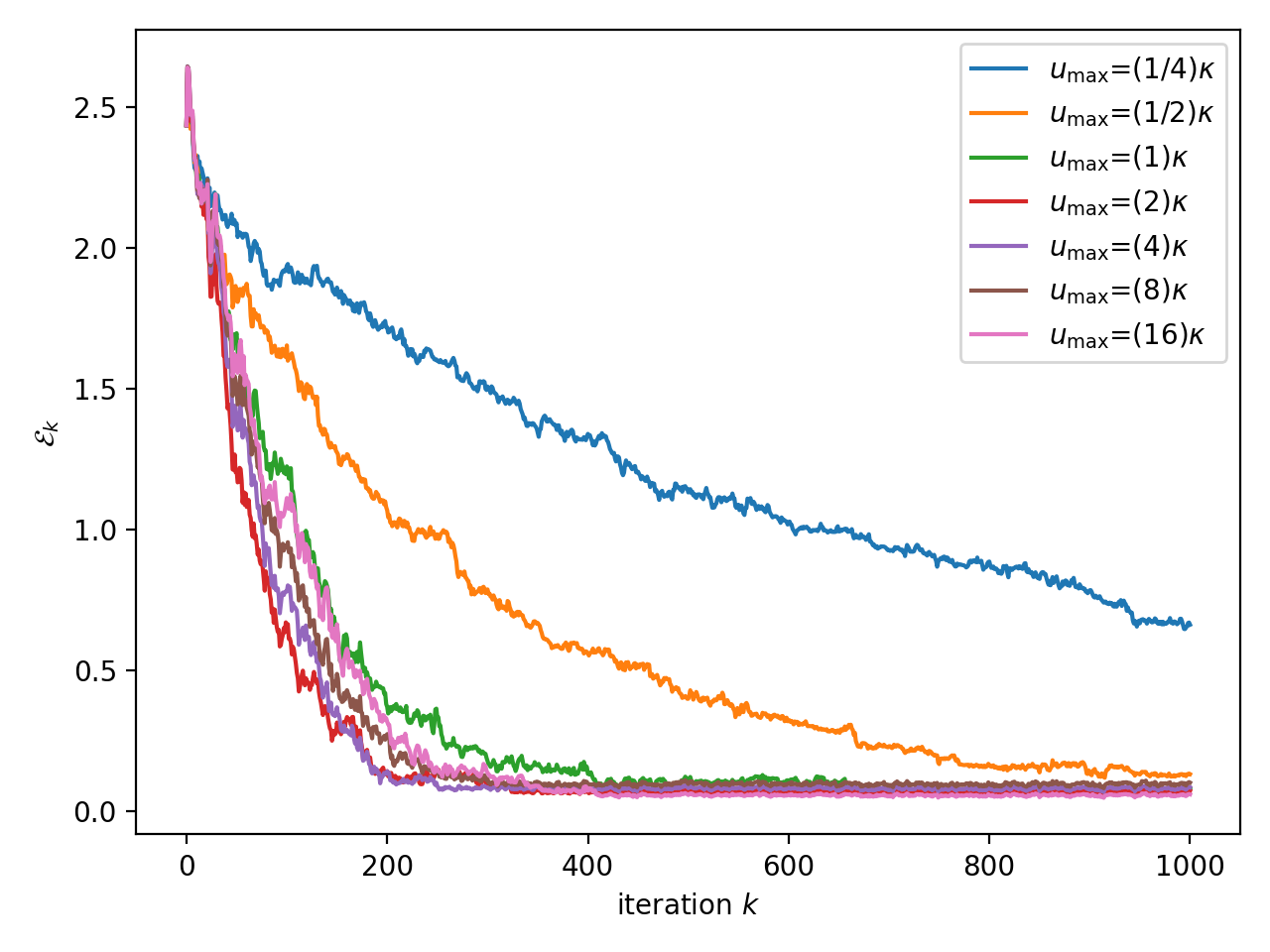}
    \end{minipage}%
  }
  \caption{\scriptsize Example~\ref{exm:2d-gauss-mix}: $\mathcal{E}_k$ (100-trajectory average). Left: varying $u_{\min}$ (curves nearly overlap).
Right: varying $u_{\max}$ (larger $u_{\max}$ yields faster convergence).}
  \label{fig:exp-2d-umin-umax}
\end{figure}

Next, we test the \textit{effects of $\lambda$ }. 
In Figure~\ref{fig:exp-2d-lambda-tau-rho} (left), we plot $\mathcal{E}_k$ versus the Langevin iteration index $k$, fixing $(u_{\min},\rho,u_{\max},\tau)=(10^{-8},1.6,4\kappa,0)$ and varying
$\lambda=0.01\times 2^p$, $p=0,1,\ldots,5$.
For $\lambda=0.01,0.02,0.04$, the trajectories enter the $0.1$-neighborhood of the global minima  by $k=500$ and remain in the region.
For $\lambda=0.08,0.16$, the error $\mathcal{E}_k$ is larger (about $0.1$--$0.2$ at $k = 500$), while for $\lambda=0.32$ the trajectories remain far from the minimizer set (with $\mathcal{E}_k$ oscillating around $1.8$).

Moreover, introducing truncation as in \eqref{eq:truncated-h}  improves the accuracy in the error $\mathcal{E}_k$, for large $\lambda$. Fixing $(u_{\min},u_{\max},\rho,\lambda)=(10^{-8},4\kappa,1.6,0.32)$ and take $\tau = s \sqrt{2u_{\max}},\; s = 0, \frac{1}{8}, \frac{1}{4}, \frac{1}{2}$. For $\tau = s \sqrt{2u_{\max}},\; s = \frac{1}{4}, \frac{1}{2}$, we observe that $\mathcal{E}_k<10^{-4}$ by $k=800$; see 
Figure~\ref{fig:exp-2d-lambda-tau-rho} (middle). 

Third, we vary the discount rate $\rho$. Specifically, we take $\rho = 0.2 \times 2^p, p = 1,2,...8$.
and fix $(u_{\min},\lambda,u_{\max},\tau)=(10^{-8},0.04,4\kappa,0)$. 
We observe from
Figure~\ref{fig:exp-2d-lambda-tau-rho} (right) that moderately large $\rho$'s lead to fast  convergence:
for $\rho=0.2, 0.4,0.8,1.6,3.2$,  we obtain $\mathcal{E}_k\le 0.15$ by $k=500$; whereas larger $\rho$ yields slow  convergence within a small number of iterations.

\begin{figure}[t]
  \centering
  \begin{minipage}[t]{0.32\textwidth}
    \centering
    \includegraphics[width=\textwidth]{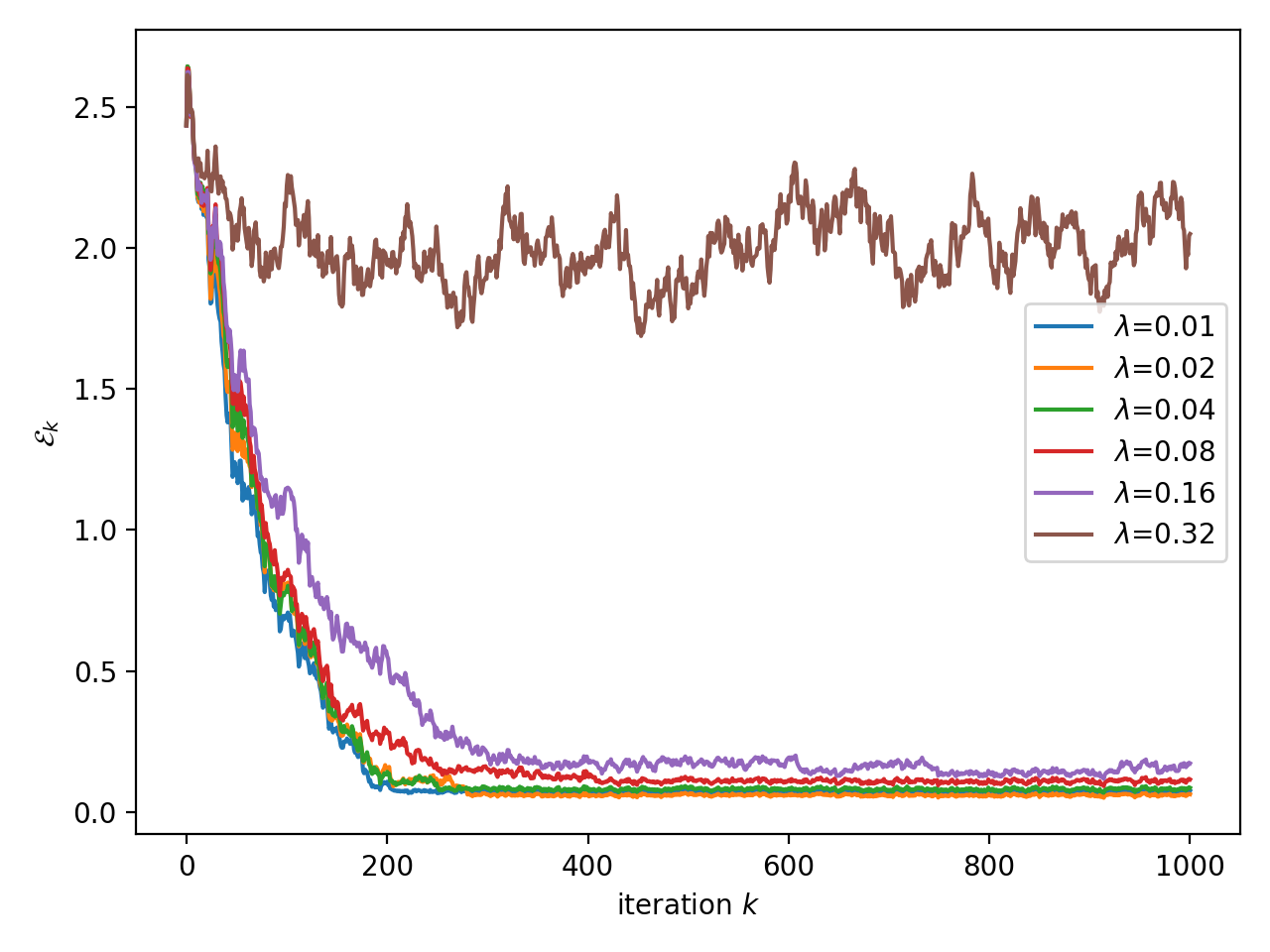}
  \end{minipage}\hfill
  \begin{minipage}[t]{0.32\textwidth}
    \centering
    \includegraphics[width=\textwidth]{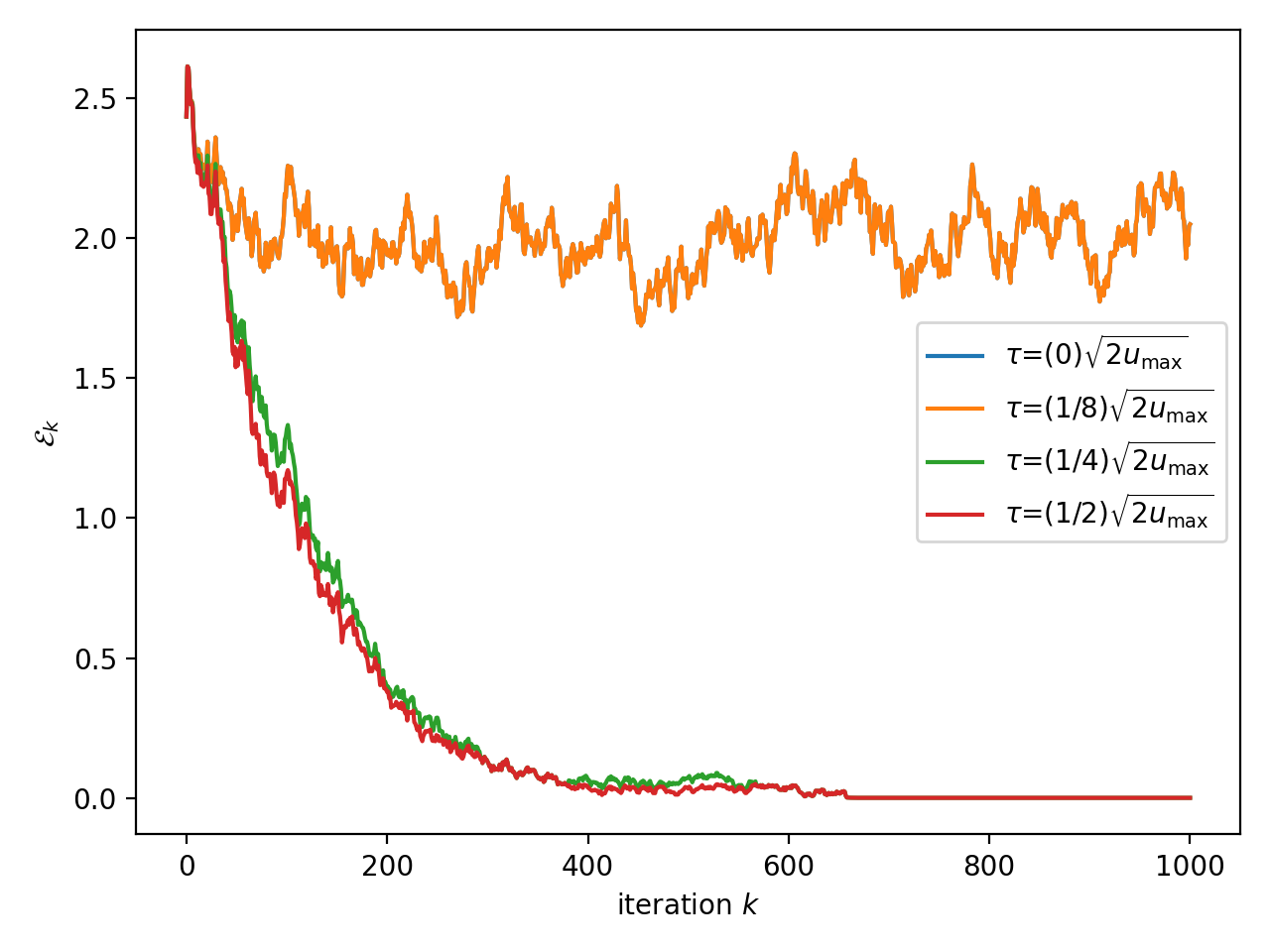}
  \end{minipage}\hfill
  \begin{minipage}[t]{0.32\textwidth}
    \centering
    \includegraphics[width=\textwidth]{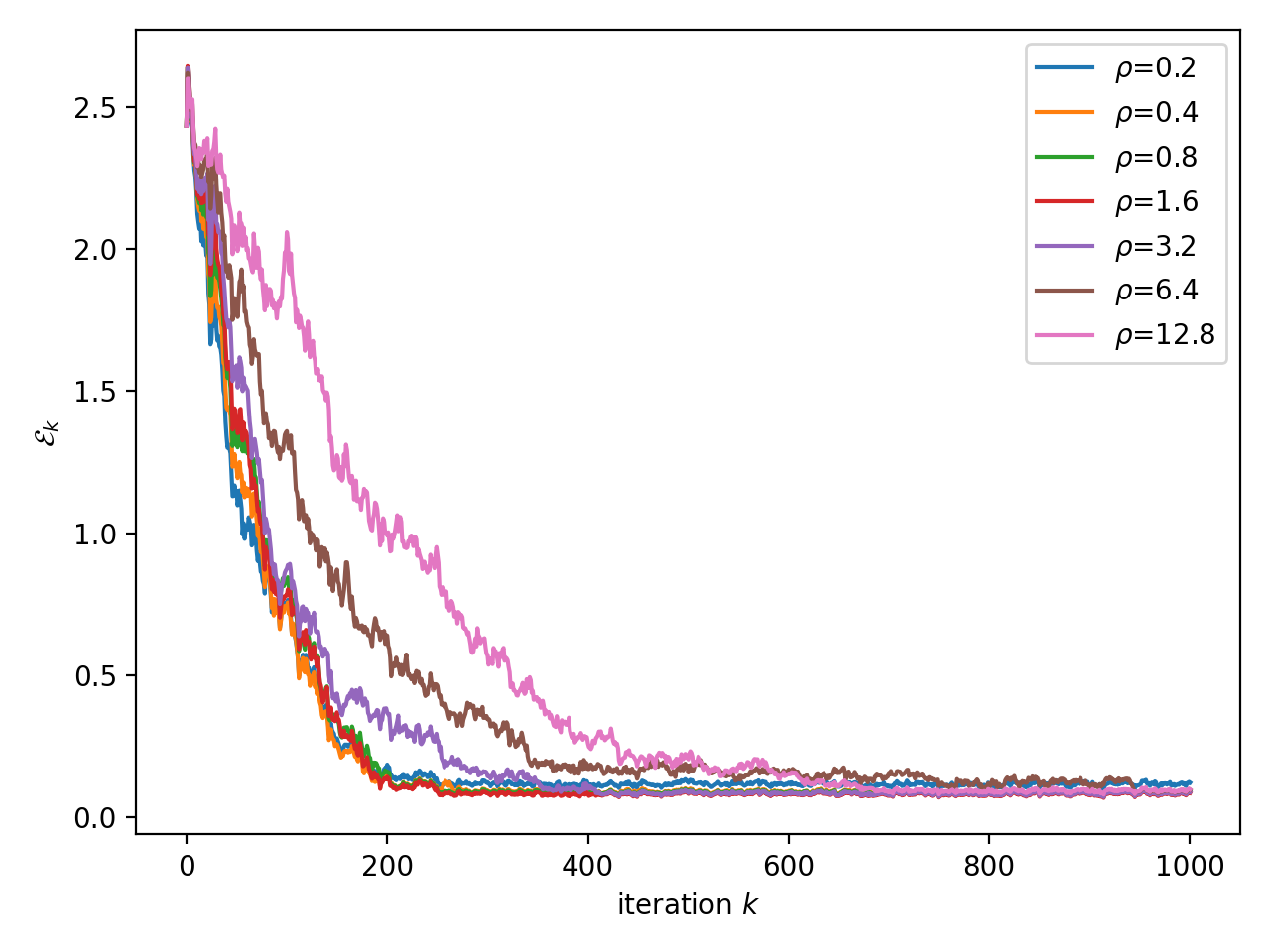}
  \end{minipage}

\caption{\scriptsize Example~\ref{exm:2d-gauss-mix}: $\mathcal{E}_k$ (100-trajectory average).
Left: varying $\lambda$. Middle: varying $\tau$ with $(u_{\max},u_{\min},\lambda)=(4\kappa,10^{-8},0.32)$.
Right: varying $\rho$. Smaller $\lambda$ and truncation for large $\lambda$ improve convergence; larger $\rho$ mainly slows early iterations.}

  \label{fig:exp-2d-lambda-tau-rho}
\end{figure}

In summary, the numerical results suggest that satisfactory performance of the stochastic Langevin dynamics for the minimization problem depends more on   $u_{\max}$ (no smaller than $\kappa$) and the regularization rate $\lambda$ than on 
$u_{\min}$ and the discount rate $\rho$.
Also, truncation in the noise coefficients  \eqref{eq:truncated-h} improves convergence for large $\lambda$ and both small and large $\rho$'s work well, but large $\rho$'s tend to slow down the convergence in early iterations.

In the following examples, we will follow the observations from Example \ref{exm:2d-gauss-mix} to set $(u_{\min},u_{\max},\rho,\lambda)$. Moreover, we only vary the truncation level $\tau$ to illustrate its practical role in stabilizing the trajectories.
We present additional simulations with various $\lambda$'s in Appendix~\ref{app:extra-sweeps}.


\begin{exm}[Easom function,  minima in a plateau]
\label{exm:easom}
Let  $\Omega=[-10,10]^2$ and 
\[
f(\bm{x})=-\cos(x_1)\cos(x_2)\exp\!\bigl(-(x_1-\pi)^2-(x_2-\pi)^2\bigr),
\qquad \bm{x}=(x_1,x_2)\in\Omega.
\]
The landscape is nearly flat around the unique global minimizer at $\bm{x}^*=(\pi,\pi)$.
\end{exm}

In this example, we choose $N_{\partial \Omega} = 4096$ in 
\eqref{eq:loss_infinite} and compute $\kappa=0.56$ in \eqref{eq:kappa-beta}.
In  Algorithm \ref{alg:mirror-boundary}, we use the step size $\eta=0.128$.  %
 We set $(u_{\min},u_{\max},\rho,\lambda)=(10^{-8}, 16 \kappa,0.2,0.01)$.

 We plot in
Figure~\ref{fig:bench-mark} (left) the errors of numerical global minimizers   $\mathcal{E}_k$ versus the Langevin iteration index $k$ for several truncation levels $\tau$. We observe 
that larger truncation improves terminal accuracy: $\tau=s\sqrt{2u_{\max}},\, s =\frac{1}{8}, \frac{1}{4}, \frac{1}{2}$ yields the smaller terminal $\mathcal{E}_k$ (near zero),
whereas smaller $\tau = 0, \frac{1}{16}\sqrt{2u_{\max}}$ give  errors $\mathcal{E}_k$'s. 
The numerical results in this example show that the proposed PINN-based temperature from the eHJB equations can help find a global minimizer of  non-convex functions with plateau landscapes.

\begin{exm}[Hartmann 6D function]
\label{exm:6d-hartmann}
Consider the   Hartmann function
\[
f(\bm{x})=\frac{1}{1.94}\Bigl(-\sum_{i=1}^4 \alpha_i
\exp\!\Bigl(-\sum_{j=1}^6 A_{ij}(x_j-P_{ij})^2\Bigr)+2.58\Bigr),
\quad \bm{x}\in\Omega=[0,1]^6,
\]
with $\alpha=(1.0,1.2,3.0,3.2)^\top$ and    $A,\,P\in\mathbb{R}^{4\times 6}$  given below: 
{\scriptsize
\[
\mathbf{A}=\begin{bmatrix}
10&3&17&3.50&1.7&8\\
0.05&10&17&0.1&8&14\\
3&3.5&1.7&10&17&8\\
17&8&0.05&10&0.1&14
\end{bmatrix},
\quad
\mathbf{P}=10^{-4}\begin{bmatrix}
1312&1696&5569&124&8283&5886\\
2329&4135&8307&3736&1004&9991\\
2348&1451&3522&2883&3047&6650\\
4047&8828&8732&5743&1091&381
\end{bmatrix}.
\]
}

This function is multimodal with multiple local minima;
the global minimizer is $\bm{x}_*=(0.20169,0.150011,0.476874,0.275332,0.311652,0.6573)$ and
$f(\bm{x}_*)=-3.32237$.
\end{exm}

We set $N_{\partial \Omega} = 4096$ in \eqref{eq:loss_infinite} and obtain $\kappa=16$ from  \eqref{eq:kappa-beta}. 
In  Algorithm \ref{alg:mirror-boundary}, we take the step size $\eta=0.016$. Set  $(u_{\min},u_{\max},\rho,\lambda)=(10^{-8}, \kappa,1.6,0.02)$.

We plot in Figure~\ref{fig:bench-mark} (right)  the errors of numerical global minimizers  $\mathcal{E}_k$ versus the Langevin iteration index $k$, for $\tau=s\sqrt{2u_{\max}}, s=0,\tfrac{1}{16},\tfrac{1}{8},\tfrac{1}{4},\tfrac{1}{2}$.
we observe that $\tau=\tfrac{1}{16}\sqrt{2u_{\max}}$ yields the best performance, while larger truncation levels increase the error $\mathcal{E}_k$ when $k$ is large.
For comparison, with $\tau=0$ we observe a plateau at $\mathcal{E}_k\approx 0.075$ for $k\ge 200$, whereas $s=\tfrac{1}{16}$ yields  smaller $\mathcal{E}_k$ at later iterations. The results suggest that a moderate truncation is preferred for this 6D multimodal benchmark.

\begin{figure}[t]
  \centering
  \resizebox{0.8\textwidth}{!}{%
    \begin{minipage}[t]{0.49\textwidth}
      \centering
      \includegraphics[width=\linewidth]{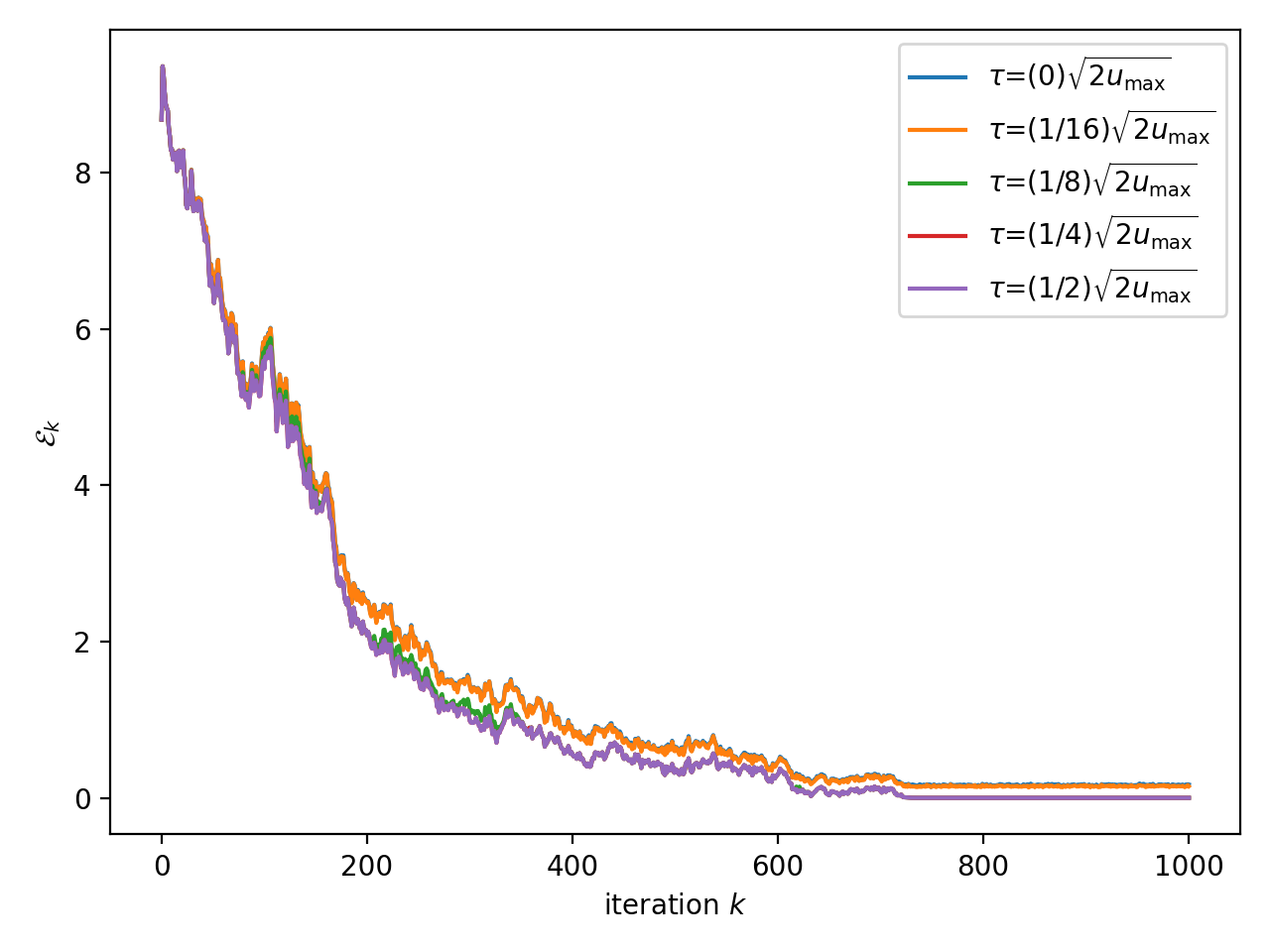}
    \end{minipage}\hfill
    \begin{minipage}[t]{0.49\textwidth}
      \centering
      \includegraphics[width=\linewidth]{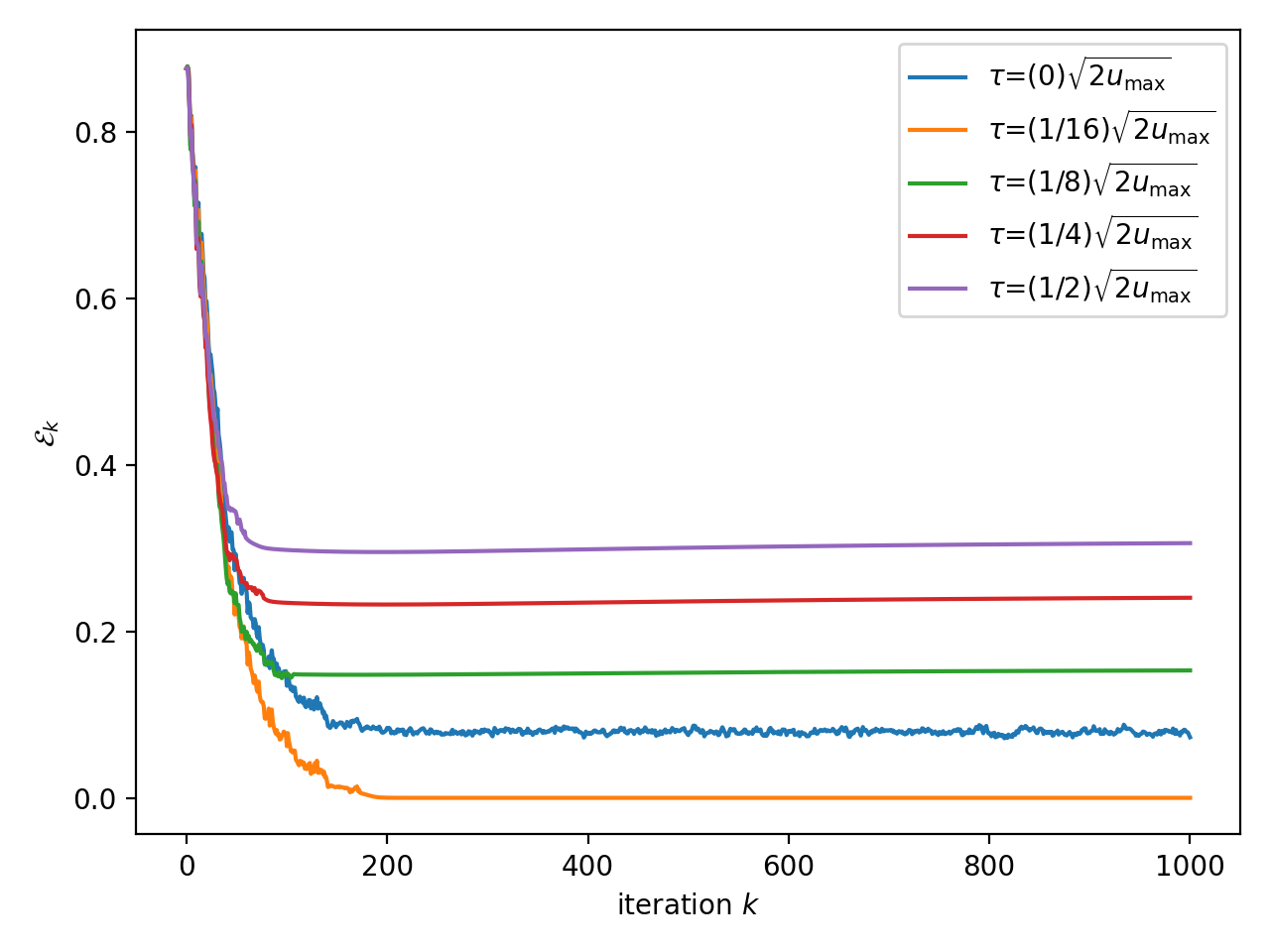}
    \end{minipage}%
  }
  \caption{\scriptsize 
Left: Example~\ref{exm:easom} (Easom), $\mathcal{E}_k$ for $\tau = \sqrt{2u_{\max}}, s=0,\tfrac{1}{16},\tfrac{1}{8},\tfrac{1}{4},\tfrac{1}{2}$.
Right: Example~\ref{exm:6d-hartmann} (Hartmann-6), $\mathcal{E}_k$ for the same truncation set.}
  \label{fig:bench-mark}
\end{figure}



\section{Conclusion and discussion}
\label{sec:conclusion}
The minimization problems solved with Langenvin dynamics were studied via controlled diffusions and the corresponding exploratory HJB equation. 
We apply physics-informed neural networks to solve the exploratory HJB equation. 
The quantity of most interest, 
the trace of the Hessian, is proved to converge with order one in the regularization parameter $\lambda$. 
Further analysis shows that the 
error induced by the physics-informed neural networks is at the order of the  magnitude of PDE residual (Theorem \ref{thm:convergence-lambda-pertub}). 
We demonstrate through several representative examples that the approach can be effective for solving minimization problems up to 6 dimensions, including overcoming saddle points and finding a global minimum on a plateau.
However, the computational cost is high due to the  PINN solvers for eHJB equations.  Further computational cost reduction may come from using the solution structure of eHJB equations as well as more advanced techniques, such as tensor neural networks (e.g. \cite{wang2024tensorneuralnetworkshighdimensional}) and random mini-batch in dimensionality \cite{Hu_2024}.  
Also, systematic studies on classical optimization benchmark functions should be conducted. 


\section*{Acknowledgment}
The authors would like to thank Professors 
Zuo Quan Xu of Hong Kong Polytechnic University, 
 George Yin of  
University of Connecticut,
and
Xun Yu Zhou of Columbia University for valuable discussions.


\appendix

\section{Monotone finite differences and Howard policy iteration for the classical HJB equation}
\label{app:fd-howard}

We briefly summarize the numerical scheme used to compute the PDE-based reference
solution of the classical HJB equation~\eqref{eq:classical-hjb} 
on $\Omega=(x_L,x_R)$
with homogeneous Neumann boundary conditions
\begin{equation}
v_x(x_L)=0,\qquad v_x(x_R)=0.
\label{eq:neumann_app}
\end{equation}
Denote $b(\bm{x}):=-f'(\bm{x})$ for simplicity. 
We use a uniform grid $x_i=x_L+i\Delta x$ for $i=0,\dots,N-1$ with
$\Delta x=(x_R-x_L)/(N-1)$ and unknowns $v_i\approx v(x_i)$.
The Neumann conditions~\eqref{eq:neumann_app} are enforced by
\begin{equation}
\frac{v_1-v_0}{\Delta x}=0,\qquad \frac{v_{N-1}-v_{N-2}}{\Delta x}=0.
\label{eq:neumann_fd_app}
\end{equation}
At interior nodes $i=1,\dots,N-2$, we discretize $v_{xx}$ by a central difference and
$b\,v_x$ by an upwind difference:
\[
v_{xx}(x_i)\approx \frac{v_{i+1}-2v_i+v_{i-1}}{(\Delta x)^2},\qquad
b(x_i)v_x(x_i)\approx
b_i^+\frac{v_i-v_{i-1}}{\Delta x}+b_i^-\frac{v_{i+1}-v_i}{\Delta x},
\]
where $b_i=b(x_i)$, $b_i^+=\max(b_i,0)$, and $b_i^-=\min(b_i,0)$.
For a fixed grid control $u_i\in[u_{\min},u_{\max}]$, the resulting interior equation is
\begin{equation}
-\rho v_i + f_i
+ u_i\frac{v_{i+1}-2v_i+v_{i-1}}{(\Delta x)^2}
+ b_i^+\frac{v_i-v_{i-1}}{\Delta x}
+ b_i^-\frac{v_{i+1}-v_i}{\Delta x}
=0,\, i=1,\dots,N-2.
\label{eq:fd_row_app}
\end{equation}
We solve the resulting nonlinear system using Howard's policy iteration.

In Example  \ref{exm:double-well}, 
$\Omega=[x_L,x_R]=[-6,6]$ and we use a uniform grid with $N=1001$ points (spacing $\Delta x=(x_R-x_L)/(N-1)$), discount rate $\rho=0.4$, and diffusion control $u(\bm{x})\in[u_{\min},u_{\max}]=[0.2,142]$; Howard iteration is run in double precision and stops when $\|u^{(k+1)}-u^{(k)}\|_\infty<\varepsilon_u=10^{-4}$ or when $k$ reaches $k_{\max}=2000$.

\section{Additional tests for Easom and Hartmann-6}\label{app:extra-sweeps}

We present in  Figure ~\ref{fig:app-lambda-sweeps}  additional tests with several $\lambda$'s for the Easom and Hartmann-6 optimization problems under fixed
$(u_{\min},u_{\max},\rho,\tau)$ and step size $\eta$. We observe that 
small errors $\mathcal{E}_k$, $k\leq 1000$, of the numerical global minimizers for $\lambda=0.01,0.02,0.04,0.08,0.16$.  However, a relatively small $\lambda$ is preferred in simulations. 

\begin{figure}[t]
  \centering
  \begin{minipage}[t]{0.49\textwidth}
    \centering
    \includegraphics[width=\linewidth]{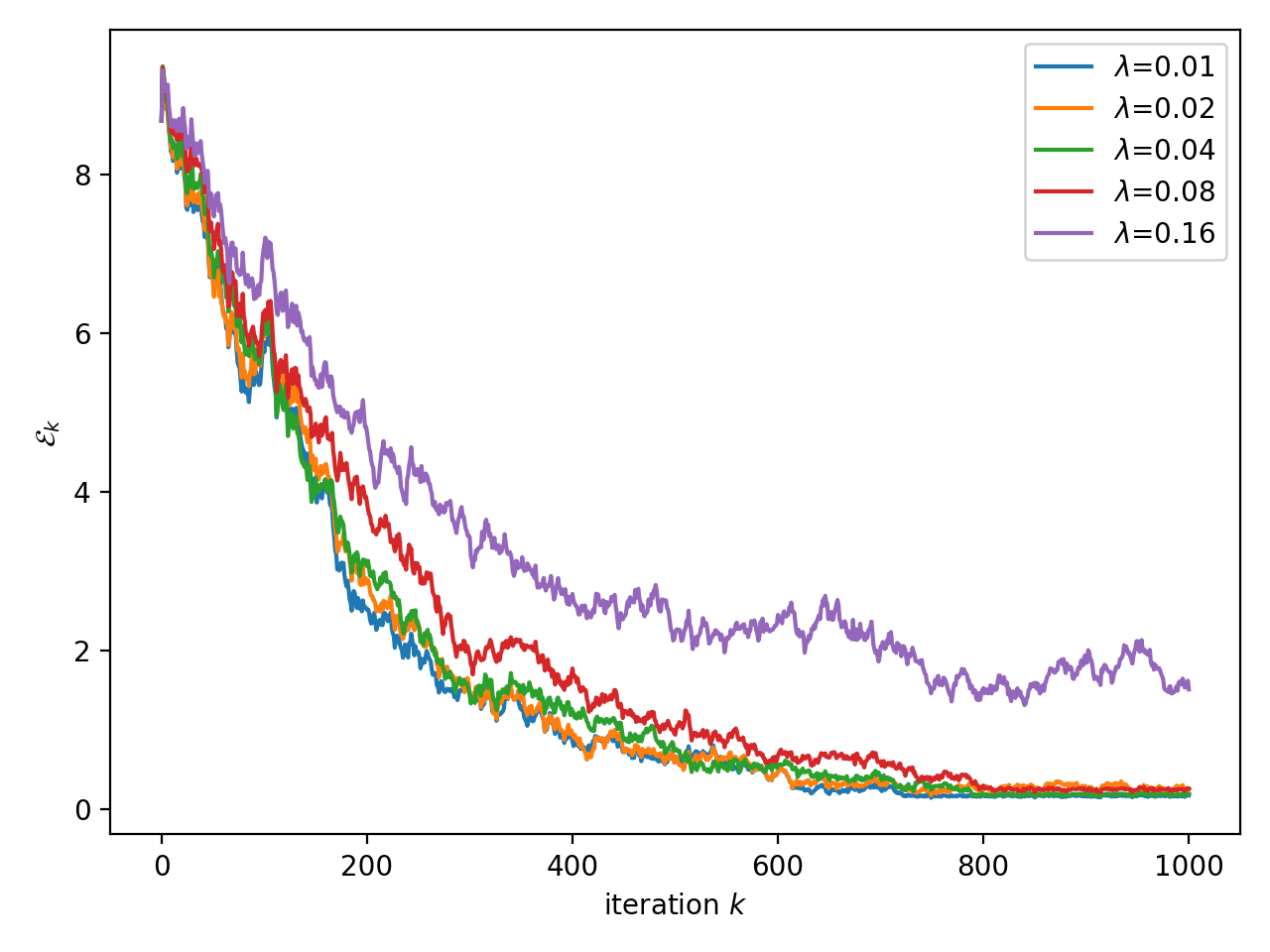}
    {\scriptsize (a) Easom: varying $\lambda$}
  \end{minipage}\hfill
  \begin{minipage}[t]{0.49\textwidth}
    \centering
    \includegraphics[width=\linewidth]{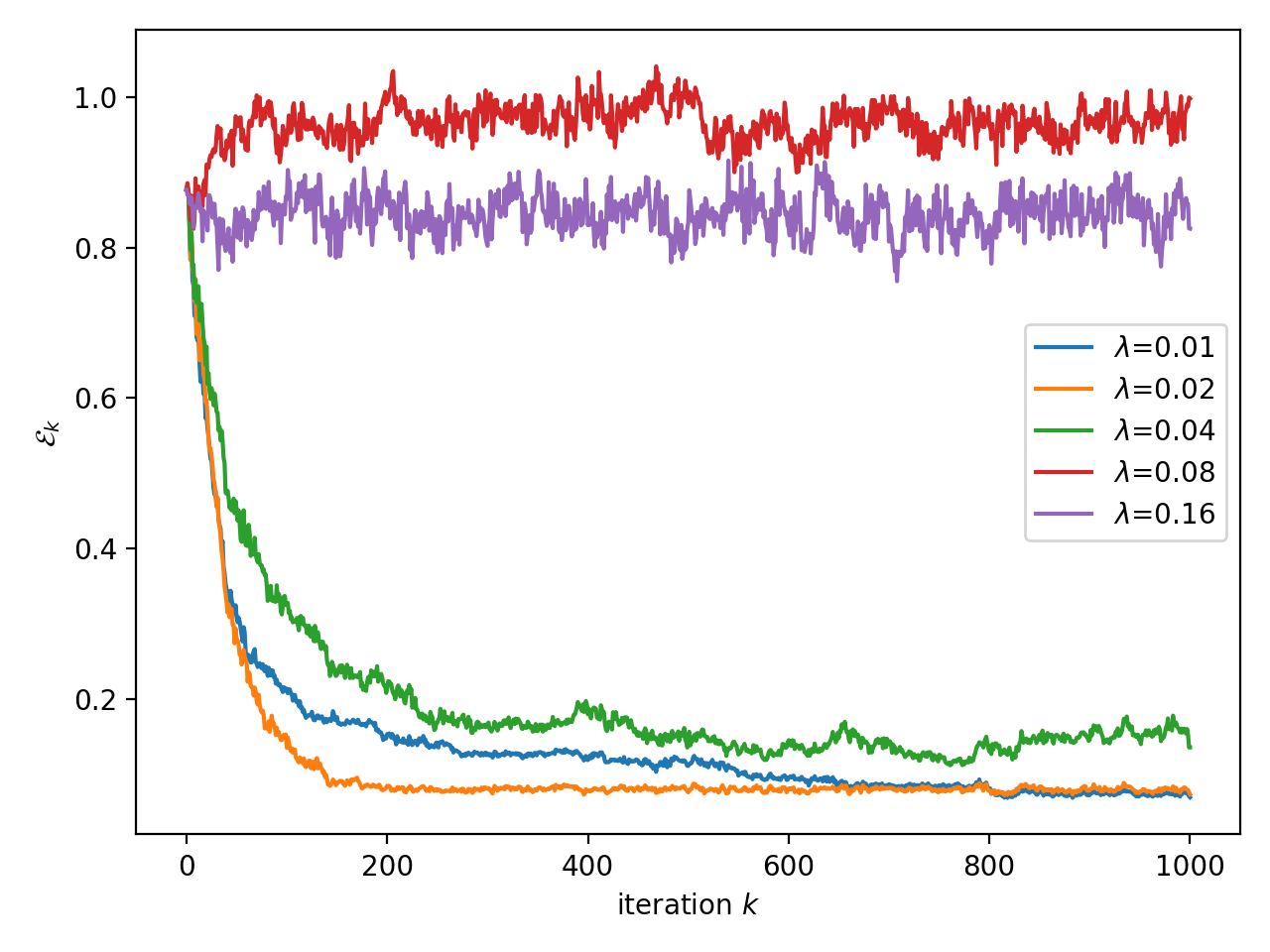}
    {\scriptsize (b) Hartmann-6: varying $\lambda$}
  \end{minipage}

  \caption{\scriptsize Additional  tests of the errors $\mathcal{E}_k$ of numerical global minimizers with various $\lambda$'s. 
  (a) Easom with $(u_{\min},u_{\max},\rho,\tau)=(10^{-8},16\kappa,0.02,0)$ and step size $\eta=0.128$.
  (b) Hartmann-6 with $(u_{\min},u_{\max},\rho,\tau)=(10^{-8},\kappa,1.6,0.02)$ and step size $\eta=0.016$.
  In both cases, $\lambda=0.01,0.02,0.04,0.08,0.16$.}
  \label{fig:app-lambda-sweeps}
\end{figure}

 

\section{Proofs}
From Theorem \ref{thm:convergence-lambda-pertub}, $\Omega$ is open and bounded with a smooth boundary. 
Then we obtain that  $v,v_\lambda\in C^{2,\alpha}$ with  
$\alpha \in (0,1)$ depends on dimension $d$ and   $u_{\min}>0$.
%
Define $
    H_\lambda(\bm{x}) =-\lambda \ln \int_{u_{\min}}^{u_{\max}} \exp\left(-  {\lambda}^{-1} {\tr X}  u \right) du$, and 
    $
    H(\bm{x})=(u_{\min}\chi_{\tr X>0}+u_{\max}\chi_{\tr X<0})\tr X$, where 
    $\chi_{A}(\cdot)$ is the indicator function on the set $A$. 

By direct calculations (or see Lemma 4.4 of  \cite{TangZhangZhou2022}),    we have, 
for any $X,Y \in \Real^{d\times d}$,  
    \begin{equation}\label{eq:h-lipschitz}
    u_{\min} \|X-Y\| \leq \abs{H (\bm{x})-H(Y)}\leq u_{\max} \|X-Y\|.
    \end{equation}

\begin{lem}\label{lem:estimate-g-lambda}
Suppose that $X\in C^{0,\alpha}$, $\alpha\in (0,1)$. There exists
$C>0$ independent  of $\lambda$ such that 
%
\[  \|H_\lambda(\bm{x})-H(\bm{x})\|_{C^{0,\gamma}} \leq C \lambda^{1-\gamma/\alpha},  \, 
0\leq \gamma<\alpha.
\] 
\end{lem}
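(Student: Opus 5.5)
The plan is to reduce everything to a one-variable estimate in $t=\tr X(\bm{x})$, and then combine a sup-norm bound with a Lipschitz bound by interpolation. Write $g_\lambda(t)=-\lambda\ln\int_{u_{\min}}^{u_{\max}}e^{-tu/\lambda}\,du$ and $g(t)=\min(u_{\min}t,u_{\max}t)$, so that $H_\lambda(\bm{x})=g_\lambda(t(\bm{x}))$ and $H(\bm{x})=g(t(\bm{x}))$. Set $D_\lambda=g_\lambda-g$. Since $X\in C^{0,\alpha}$ on the bounded domain $\Omega$, the function $t$ is bounded, $|t|\le M$, and $[t]_{\alpha}\le d\,[X]_\alpha$.

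\emph{Step 1 (Lipschitz bound uniform in $\lambda$).} Differentiating under the integral gives
\[
g_\lambda'(t)=\frac{\int u\,e^{-tu/\lambda}\,du}{\int e^{-tu/\lambda}\,du}\in[u_{\min},u_{\max}],
\]
which is the Gibbs mean of $u$. Also $g'\in\{u_{\min},u_{\max}\}$ almost everywhere, which is the scalar content of \eqref{eq:h-lipschitz}. Hence $D_\lambda$ is Lipschitz with constant $\delta=u_{\max}-u_{\min}$, independent of $\lambda$. This gives
\[
|D_\lambda(t(\bm{x}))-D_\lambda(t(\bm{y}))|\le \delta\,[t]_\alpha\,|\bm{x}-\bm{y}|^\alpha .
\]

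\emph{Step 2 (interpolation).} For $0\le\gamma<\alpha$ and any $\bm{x}\neq\bm{y}$, write $|\Delta|=|D_\lambda(t(\bm{x}))-D_\lambda(t(\bm{y}))|$ and use $|\Delta|=|\Delta|^{1-\gamma/\alpha}|\Delta|^{\gamma/\alpha}$. Bound the first factor by $2\sup|D_\lambda|$ and the second by Step 1. This yields
\[
[D_\lambda\circ t]_{\gamma}\le (2\sup_{|t|\le M}|D_\lambda|)^{1-\gamma/\alpha}(\delta[t]_\alpha)^{\gamma/\alpha}.
\]
Once one has $\sup_{|t|\le M}|D_\lambda(t)|\le C\lambda$, the $C^{0,\gamma}$ norm is at most $C\lambda+C\lambda^{1-\gamma/\alpha}\le C\lambda^{1-\gamma/\alpha}$ for $\lambda\le1$, which is the claim.

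\emph{Step 3 (sup bound; main obstacle).} For $t>0$ the integral is explicit, and
\[
D_\lambda(t)=\lambda\ln\frac{t}{\lambda}-\lambda\ln\bigl(1-e^{-t\delta/\lambda}\bigr).
\]
The case $t<0$ is symmetric, with $u_{\max}$ in place of $u_{\min}$. At $t=0$ we have $D_\lambda=-\lambda\ln\delta$.

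I would split into the regions $|t|\le\lambda$ and $\lambda<|t|\le M$. On $|t|\le\lambda$ the expression is $O(\lambda)$ directly, using $(1-e^{-s})/s\in[e^{-s},1]$ with $s=t\delta/\lambda$. On $|t|>\lambda$ the second term is $O(\lambda e^{-\delta})$, but the first term is as large as $\lambda\ln(M/\lambda)$.

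This is exactly where the difficulty sits. The pure rate $\lambda$ holds uniformly only if $\lambda\ln(|t|/\lambda)$ is controlled, and in general it is not. The first thing I would try is to check whether the intended comparison subtracts the entropy term $\lambda\ln(|t|/\lambda)$ from $H$, that is, whether one compares with $H$ plus this entropy correction. With that correction, Step 3 gives $O(\lambda)$ and Steps 1--2 go through unchanged, because the correction is also Lipschitz in $t$ away from $0$ and is $O(\lambda)$-close near $0$. If instead the statement is meant literally with $H$ as defined, Steps 1--2 still apply. In that case Step 3 yields only $C(\lambda(1+|\ln\lambda|))^{1-\gamma/\alpha}$, and the stated rate $C\lambda^{1-\gamma/\alpha}$ would not follow from this argument.
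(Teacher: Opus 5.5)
Your Step~3 diagnosis is correct, and it is not just a weakness of your route: the case $\gamma=0$ of the statement is false. Take $X\equiv I/d$, so $\tr X\equiv 1$. Your formula then gives $H_\lambda-H=\lambda\ln(1/\lambda)-\lambda\ln(1-e^{-\delta/\lambda})\ge \lambda\ln(1/\lambda)$. So no $\lambda$-independent $C$ gives a bound $C\lambda$, and the correct sup rate is $\lambda(1+\abs{\ln\lambda})$.

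The paper's proof breaks at exactly this point. It asserts the $\gamma=0$ case ``from the Lipschitz continuity of $H$ and $H_\lambda$ and direct calculations.'' In the H\"older step, its bound $C\abs{x-y}^{\alpha-\gamma}+C\lambda^{-1}\abs{x-y}^{2\alpha-\gamma}$ is increasing in $\abs{x-y}$. The claimed supremum at $\abs{x-y}^\alpha\sim\lambda$ can therefore only be justified by capping the numerator by $2\sup\abs{H_\lambda-H}\le C\lambda$ when $\abs{x-y}^\alpha\gtrsim\lambda$. In effect, the paper's argument is your Steps~1--2 fed with the false sup bound; the estimate $\norm{D^2H_\lambda}\le C\lambda^{-1}$ is not needed. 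The proof of Theorem~\ref{thm:convergence-order-lambda} uses exactly this $L^\infty$ case for $g_\lambda$, so it inherits the logarithmic loss.

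Where your proposal does fall short is $0<\gamma<\alpha$. There the stated rate is true, and your extra factor $\abs{\ln\lambda}^{1-\gamma/\alpha}$ comes only from bounding the first interpolation factor by the global supremum of $D_\lambda$. The missing idea is a scale-dependent oscillation bound. Your formulas give $D_\lambda(t)=\lambda\bigl(k(\delta\abs{t}/\lambda)-\ln\delta\bigr)$ with $k(x)=\ln\frac{x}{1-e^{-x}}$, extended by $k(0)=0$. The derivative $k'(x)=\frac1x-\frac{1}{e^x-1}$ is positive and decreasing (the latter is equivalent to $x\le 2\sinh(x/2)$), so $k$ is increasing and concave. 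Also $k(x)\le\ln(1+x)$, because $(1+x)e^{-x}\le 1$. Hence
\[
\abs{D_\lambda(t_1)-D_\lambda(t_2)}\le \lambda\,k\bigl(\delta\abs{\abs{t_1}-\abs{t_2}}/\lambda\bigr)\le \lambda\ln\bigl(1+\delta\abs{t_1-t_2}/\lambda\bigr).
\]
Now let $\abs{t(\bm{x})-t(\bm{y})}\le L\abs{\bm{x}-\bm{y}}^\alpha$, $\beta=\gamma/\alpha$ and $s=\delta L\abs{\bm{x}-\bm{y}}^\alpha/\lambda$. Using $\ln(1+s)\le s^\beta/\beta$, the H\"older quotient is at most $(\delta L)^\beta\lambda^{1-\beta}s^{-\beta}\ln(1+s)\le \beta^{-1}(\delta L)^\beta\lambda^{1-\beta}$. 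The same bound with $t_2=0$ gives $\sup\abs{D_\lambda}\le\lambda\abs{\ln\delta}+\beta^{-1}(\delta M)^\beta\lambda^{1-\beta}$. This proves the lemma for $0<\gamma<\alpha$ and $\lambda$ in a bounded range. The constant is of order $\alpha/\gamma$ and blows up as $\gamma\to0$, consistent with the failure at $\gamma=0$.

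So the entropy-corrected comparison you suggest is unnecessary. The right repair of the statement is to require $\gamma>0$, and to replace $\lambda$ by $\lambda(1+\abs{\ln\lambda})$ when $\gamma=0$.
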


\begin{proof}
%
Let $Y= X(y)$ and $X=X(\bm{x})$, $x,y\in \Omega$.
By direct calculations,  
$\norm{DH_\lambda}\leq C$ and 
$\norm{D^2 H_\lambda}\leq C\lambda^{-1}$, where $C$ is independent of $\lambda$. 
The conclusion for $\gamma=0$ then follows from the Lipschitz continuity of $H$ and $H_\lambda$ and direct calculations. %

By Taylor's formula on $H_\lambda(\bm{x})$ around $Y$, we have 
\begin{align*}
   & \abs{H_\lambda(\bm{x}) -H(\bm{x}) - \big({H_\lambda(Y) -H(Y)}\big)}\\
   &\leq
    \abs{H(\bm{x}) -H(Y)}+ \abs{H_\lambda(\bm{x}) -H_\lambda(Y)}\\
    &     \leq C\norm{X-Y} + \abs{DH_\lambda:(X-Y)} + 
     C\norm{D^2H_\lambda}\norm{X-Y}^2 \\
     &\leq C\norm{X-Y}  + C \lambda^{-1}\norm{X-Y}^2,
     \end{align*}
     where the constant $C$ may change from step to step, while remaining a constant (the same applies to the rest of the argument). Since both $X$ and $Y$ are in $C^{0,\alpha}$, we have, for 
     $0<\gamma<\alpha$,
     \begin{equation*}
          \frac{\abs{H_\lambda(\bm{x}) -H(\bm{x}) - \big({H_\lambda(Y) -H(Y)}\big)}}{\abs{x-y}^\gamma}\leq 
     C \abs{x-y}^{\alpha-\gamma} +
     C \lambda^{-1}\abs{x-y}^{2\alpha-\gamma}. 
     \end{equation*}
It can be readily verified that when 
$\abs{x-y}^\alpha $ is at the order of $ \lambda$, the right-hand side will reach its supremum, which is at the order of 
$\lambda^{1-\gamma/\alpha}$.
\end{proof}

\begin{thm}\label{thm:convergence-order-lambda}
Suppose that 
$f\in C^2(\Omega)$. Then  
   \begin{eqnarray}
\sqrt{u_{\min}} \norm{\nabla v_\lambda-\nabla v}_{L^\infty(\Omega)} +  
 u_{\min}\norm{\Delta v_\lambda-\Delta v}_{L^\infty(\Omega)}\leq 
 C   \big( \norm {w_\lambda}_{L^\infty(\Omega)}  
   + \norm{g_\lambda}_{L^\infty(\Omega)}
   \big)\leq C\lambda .
  \label{eq:convergence-derivatives-infty}\end{eqnarray}
\end{thm}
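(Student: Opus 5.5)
We compare the two equations through the difference $w_\lambda := v_\lambda - v$. Subtracting \eqref{eq:classical-hjb} from \eqref{eq:exp-temp-lang} gives
\[
-\rho w_\lambda - \nabla f\cdot\nabla w_\lambda + H(\nabla^2 v_\lambda) - H(\nabla^2 v) = -g_\lambda,
\qquad g_\lambda := H_\lambda(\nabla^2 v_\lambda)-H(\nabla^2 v_\lambda),
\]
with homogeneous Neumann data for $w_\lambda$.

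\textbf{Step 1: the perturbation $g_\lambda$.} We take $\gamma=0$ in Lemma \ref{lem:estimate-g-lambda} with $X=\nabla^2 v_\lambda\in C^{0,\alpha}$. This gives $\norm{g_\lambda}_{L^\infty}\le C\lambda$. The bound can also be seen directly, since $0\le H_\lambda(X)-H(X)\le -\lambda\ln(\lambda/|\tr X|)$-type quantities are controlled by $\lambda\ln\delta$ plus $O(\lambda)$. We need the $C^{2,\alpha}$ bounds of $v_\lambda$ to be uniform in $\lambda$, and we take this from \cite{TangZhangZhou2022}.

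\textbf{Step 2: linearize the Hamiltonian.} Since $H$ depends only on the trace, we write $H(\nabla^2 v_\lambda)-H(\nabla^2 v)=a_\lambda(\bm{x})\,\Delta w_\lambda$. The coefficient $a_\lambda$ is a difference quotient, and by \eqref{eq:h-lipschitz} it satisfies $a_\lambda\in[u_{\min},u_{\max}]$. Then $w_\lambda$ solves the uniformly elliptic linear problem
\[
a_\lambda\Delta w_\lambda - \nabla f\cdot\nabla w_\lambda - \rho w_\lambda = -g_\lambda,
\qquad \partial_n w_\lambda=0.
\]
The maximum principle, applied with the Neumann condition and $\rho>0$, gives $\norm{w_\lambda}_{L^\infty}\le \rho^{-1}\norm{g_\lambda}_{L^\infty}\le C\lambda$.

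\textbf{Step 3: the Laplacian.} We solve pointwise for $\Delta w_\lambda$:
\[
u_{\min}|\Delta w_\lambda|\le |a_\lambda\Delta w_\lambda| \le \rho|w_\lambda| + |\nabla f|\,|\nabla w_\lambda| + |g_\lambda|.
\]
It therefore remains to bound $\nabla w_\lambda$.

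\textbf{Step 4: the gradient (main obstacle).} Here $a_\lambda$ is only measurable, so Schauder theory is not available. Two routes are possible.

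\emph{First route.} We use an interpolation inequality $\norm{\nabla w}_\infty\le C\norm{w}_\infty^{1/2}\norm{\Delta w}_\infty^{1/2}$, valid for Neumann data on a smooth bounded domain. Combined with Step 3 and Young's inequality, this absorbs the gradient term. The resulting estimate is $\sqrt{u_{\min}}\norm{\nabla w_\lambda}_\infty+u_{\min}\norm{\Delta w_\lambda}_\infty\le C(\norm{w_\lambda}_\infty+\norm{g_\lambda}_\infty)$, which is precisely the weighted form in \eqref{eq:convergence-derivatives-infty}.

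\emph{Fallback route.} We apply a Bernstein-type argument to $|\nabla w_\lambda|^2+\mu w_\lambda^2$, differentiating the equation. This requires $f\in C^2$ and uses the $C^{2,\alpha}$ regularity to justify the differentiation.

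Combining Steps 1 through 4 yields the bound $C\lambda$.
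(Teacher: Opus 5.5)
The step that fails is Step 1, the bound $\norm{g_\lambda}_{L^\infty}\le C\lambda$, and your own ``direct'' check shows why. Write $t=\tr X\neq 0$ and substitute $u=u_{\min}+s$ (if $t>0$) or $u=u_{\max}-s$ (if $t<0$). This gives
\[
H_\lambda(X)-H(X)=-\lambda\ln\int_0^{\delta}e^{-s\abs{t}/\lambda}\,ds=\lambda\ln\frac{\abs{t}}{\lambda}-\lambda\ln\bigl(1-e^{-\delta\abs{t}/\lambda}\bigr)\;\ge\;\lambda\ln\frac{\abs{t}}{\lambda}.
\]
So the quantity $-\lambda\ln(\lambda/\abs{t})$ you wrote down is of size $\lambda\ln(1/\lambda)$, not $\lambda\ln\delta+O(\lambda)$. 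Also, $H_\lambda-H$ is not nonnegative: at $t=0$ it equals $-\lambda\ln\delta$, which is negative when $\delta>1$. Consequently $\norm{g_\lambda}_{L^\infty}\ge\lambda\ln(\abs{t_0}/\lambda)$ for any nonzero value $t_0$ taken by the Laplacian at which $g_\lambda$ is evaluated. In every nontrivial case, $\norm{g_\lambda}_{L^\infty}$ is therefore at least of order $\lambda\abs{\ln\lambda}$.

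Citing Lemma \ref{lem:estimate-g-lambda} with $\gamma=0$ does not repair this. That case of the lemma is only asserted via ``direct calculations'' and is contradicted by the display above. The paper's proof of this theorem relies on the same lemma, so it has the same defect. What your argument (and the paper's) actually delivers is $C\lambda(1+\abs{\ln\lambda})$ in place of $C\lambda$. This uses $-\lambda\ln\delta\le H_\lambda-H\le\lambda\bigl(1+\max\{\ln(1/\delta),\ln(\abs{t}/\lambda)\}\bigr)$ together with a bound on the Laplacian. The first inequality in \eqref{eq:convergence-derivatives-infty} is unaffected.

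Apart from this, Steps 2--4 are the paper's proof. You linearize to get a linear equation for $w_\lambda$ with coefficient in $[u_{\min},u_{\max}]$, read $u_{\min}\abs{\Delta w_\lambda}$ off the equation pointwise, and absorb $\norm{\nabla w_\lambda}_{L^\infty}$ through the Neumann interpolation inequality and Young. There are two differences.

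First, the paper linearizes the smooth $H_\lambda$, with $A_\lambda=\int_0^1 DH_\lambda\bigl(\nabla^2 v+s(\nabla^2 v_\lambda-\nabla^2 v)\bigr)\,ds\ge u_{\min}I$. It sets $g_\lambda=H(\nabla^2 v)-H_\lambda(\nabla^2 v)$ at the \emph{fixed} Hessian, so only bounds on $\Delta v$ are needed to estimate $g_\lambda$. Your difference quotient of $H$ places $g_\lambda$ at $\nabla^2 v_\lambda$, which is why you must import $\lambda$-uniform bounds on $\Delta v_\lambda$.

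Second, your maximum-principle bound $\norm{w_\lambda}_{L^\infty}\le\rho^{-1}\norm{g_\lambda}_{L^\infty}$ is a genuine simplification. It uses Hopf's lemma plus $\partial_n w_\lambda=0$ at boundary extrema and works with either linearization. It replaces the paper's citation of \cite{TangZhangZhou2022} for $\norm{w_\lambda}_{L^\infty}\le C\lambda$. It also makes the whole estimate hinge on $\norm{g_\lambda}_{L^\infty}$ alone, which is exactly where the logarithm enters.
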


\begin{proof}[Proof of Theorem \ref{thm:convergence-order-lambda}]
The difference $w_\lambda=v_\lambda- v$ satisfies the following equation
\begin{align}  \label{eq:difference-eqn}
\rho w_\lambda -\nabla f\cdot \nabla w_\lambda
 +   A_{\lambda} \Delta w_\lambda 
& =H(D^2v)- H_\lambda(D^2v) =: g_\lambda (\bm{x})  \end{align}
where we apply Taylor's theorem
$H_\lambda(D^2v_\lambda) - H_\lambda(D^2v)=A_\lambda (\Delta v_\lambda - \Delta v)
$ and 
$A_\lambda=\int_0^1
D H_\lambda \left(
D^2v + s(D^2v_\lambda - D^2v)
\right) 
\,ds$.

Since $ u_{\min} I \leq A_\lambda$,
the equation of $w_\lambda$ is uniformly elliptic. 
By the equation \eqref{eq:difference-eqn} and 
$f\in C^{2}$, we have 
\begin{equation}
   u_{\min}\norm{ \Delta w_\lambda}_{L^{\infty}(\Omega)}
   \leq C_K\big( \norm {w_\lambda}_{L^\infty(\Omega)} + \norm{\nabla w_\lambda}_{L^\infty(\Omega)}
   + \norm{g_\lambda}_{L^\infty(\Omega)}
   \big),
\end{equation}
and by the interpolation inequality
$ \norm{ \nabla w_\lambda}_{L^\infty (\Omega)} \leq 
    C_K \norm{w_\lambda}_{L^\infty (\Omega)}^{1/2}  \norm{ \Delta w_\lambda}_{L^\infty (\Omega)}^{1/2}$, 
we obtain that  
\begin{equation*} u_{\min}\norm{\Delta w_\lambda}_{L^\infty(\Omega)}\leq 
 C   \big( \norm {w_\lambda}_{L^\infty(\Omega)}  
   + \norm{g_\lambda}_{L^\infty(\Omega)}
   \big).
\end{equation*} 
Then by Lemma \ref{lem:estimate-g-lambda} and $\norm {w_\lambda}_{L^\infty(\Omega)}\leq C\lambda $ (\cite{TangZhangZhou2022}), 
${u_{\min}} \norm{\Delta w_\lambda}_{L^\infty(\Omega)}\leq C\lambda$. 
Then, by the interpolation inequality above,  
$
\sqrt{u_{\min}} \norm{\nabla w_\lambda}_{L^\infty(\Omega)}\leq C \lambda 
$ and thus 
\eqref{eq:convergence-derivatives-infty} holds.
\end{proof}

\begin{thm}\label{thm:convergence-order-pertub}
Suppose that $f\in C^{2}(\Omega)$ and 
$\widetilde{R}_\lambda(\bm{x},\phi)\in L^\infty(\Omega)$ in \eqref{eq:exploratory-hjb-elliptic-pertubed}. Then we have 
   \begin{align*}
   \sqrt{u_{\min}}\norm{ \nabla (v_\lambda-v_{\lambda,\phi})}_{L^\infty(\Omega)}
   + u_{\min}\norm{ \Delta (v_\lambda-v_{\lambda,\phi})}_{L^\infty(\Omega)} &\leq C \varepsilon \norm{\widetilde{R}_\lambda(\cdot,\phi)}_{L^\infty},  \, \varepsilon>0.
\end{align*} 
\end{thm}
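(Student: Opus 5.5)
We follow the same route as in the proof of Theorem \ref{thm:convergence-order-lambda}, now comparing the exact exploratory solution $v_\lambda$ of \eqref{eq:exp-temp-lang} with the PINN solution $v_{\lambda,\phi}$ of the perturbed equation \eqref{eq:exploratory-hjb-elliptic-pertubed}. Set $e=v_{\lambda,\phi}-v_\lambda$. Subtracting the two equations gives
\begin{equation*}
-\rho e-\nabla f\cdot\nabla e+H_\lambda(\nabla^2 v_{\lambda,\phi})-H_\lambda(\nabla^2 v_\lambda)=\varepsilon\widetilde R_\lambda(\bm{x};\phi).
\end{equation*}
Since $H_\lambda$ depends on the Hessian only through its trace, the mean value theorem gives
$H_\lambda(\nabla^2 v_{\lambda,\phi})-H_\lambda(\nabla^2 v_\lambda)=B_\lambda\,\Delta e$. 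Here
\begin{equation*}
B_\lambda(\bm{x})=\int_0^1 H_\lambda'\big(\Delta v_\lambda+s\Delta e\big)\,ds .
\end{equation*}
The derivative of $t\mapsto -\lambda\ln\int_{\mathcal U}e^{-tu/\lambda}du$ is the Gibbs mean of $u$, so it lies in $[u_{\min},u_{\max}]$. Hence $u_{\min}\le B_\lambda\le u_{\max}$, uniformly in $\lambda$. Therefore $e$ solves the linear, uniformly elliptic, non-divergence equation
\begin{equation*}
\rho e+\nabla f\cdot\nabla e-B_\lambda\Delta e=-\varepsilon\widetilde R_\lambda,
\end{equation*}
with the homogeneous Neumann condition, which both functions satisfy up to the boundary loss; we neglect that loss as in Remark \ref{rem:order-convergence}.

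\textbf{Step 1 (zeroth order).} Apply the maximum principle with $\rho>0$: at an interior maximum of $e$ we have $\nabla e=0$ and $\Delta e\le0$. The Neumann condition together with the Hopf lemma excludes a boundary maximum unless $e$ is constant. This gives $\rho\|e\|_{L^\infty}\le\varepsilon\|\widetilde R_\lambda\|_{L^\infty}$.

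\textbf{Step 2 (second order).} Solve the equation algebraically for $\Delta e$:
\begin{equation*}
u_{\min}\|\Delta e\|_{L^\infty}\le \rho\|e\|_{L^\infty}+\|\nabla f\|_{L^\infty}\|\nabla e\|_{L^\infty}+\varepsilon\|\widetilde R_\lambda\|_{L^\infty}.
\end{equation*}
Here $\|\nabla f\|_{L^\infty}<\infty$ because $f\in C^2$. Use the interpolation inequality $\|\nabla e\|_{L^\infty}\le C_K\|e\|^{1/2}_{L^\infty}\|\Delta e\|^{1/2}_{L^\infty}$, as in the proof of Theorem \ref{thm:convergence-order-lambda}, and then Young's inequality $ab\le \tfrac{u_{\min}}2 b^2+\tfrac{C}{u_{\min}}a^2$. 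This absorbs the gradient term into the left side and yields $u_{\min}\|\Delta e\|_{L^\infty}\le C(\|e\|_{L^\infty}+\varepsilon\|\widetilde R_\lambda\|_{L^\infty})\le C\varepsilon\|\widetilde R_\lambda\|_{L^\infty}$.

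\textbf{Step 3 (gradient).} Substituting back into the interpolation inequality gives $\|\nabla e\|_{L^\infty}\le C\,\varepsilon\|\widetilde R_\lambda\|_{L^\infty}/\sqrt{u_{\min}}$, which is the gradient part of the claim.

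\textbf{Main obstacle.} The delicate step is the interpolation inequality controlling $\nabla e$ in $L^\infty$ by $\|e\|_{L^\infty}$ and $\|\Delta e\|_{L^\infty}$ only, rather than by the full Hessian, on a bounded domain with Neumann data. The bound on $\Delta e$ does not imply a bound on $D^2 e$ in $L^\infty$. We would justify the inequality via $W^{2,p}$ estimates for the Neumann Laplacian with $p>d$ and Morrey's embedding, accepting a constant $C_K$ that depends on $\Omega$ and $d$. We also need $C$ independent of $\lambda$; this holds because only the bounds $u_{\min}\le B_\lambda\le u_{\max}$ enter, never the regularity of $B_\lambda$. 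Finally, Theorem \ref{thm:convergence-lambda-pertub} follows from the triangle inequality combining this result with Theorem \ref{thm:convergence-order-lambda}.
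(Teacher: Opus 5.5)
Your proposal is correct and follows the route the paper intends, since the paper omits this proof as ``similar'' to that of Theorem~\ref{thm:convergence-order-lambda}: linearize $H_\lambda$ by the mean value theorem with a coefficient in $[u_{\min},u_{\max}]$, bound $u_{\min}\|\Delta e\|_{L^\infty}$ directly from the equation, and close with the interpolation inequality and an $L^\infty$ bound on $e$. Your maximum-principle bound $\rho\|e\|_{L^\infty}\le\varepsilon\|\widetilde R_\lambda\|_{L^\infty}$ and the explicit Young absorption fill in steps the paper leaves implicit; note that the Hopf step should be applied to $e-\varepsilon\|\widetilde R_\lambda\|_{L^\infty}/\rho$ rather than to $e$ itself.
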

The proof is similar to that of Theorem 
\ref{thm:convergence-order-pertub} and thus omitted. 
The only difference lies in the fact that \eqref{eq:exploratory-hjb-elliptic-pertubed} is not an actual equation and is rather a value of the 
eHJB operator evaluated at 
$v_{\lambda,\phi}$. 
 


%
\end{document}